\documentclass[11pt,leqno,amscd, amsfonts, amssymb,pstricks,verbatim]{amsart}
\usepackage{}
\usepackage{bbm}

\oddsidemargin -.1in \evensidemargin -.1in \textwidth 6.5in
\textheight 8.2in
\linespread{1.5}

\usepackage{mathrsfs}
\usepackage{amsmath}
\usepackage{amssymb}
\usepackage{hyperref}

\usepackage{mathrsfs,color}

\newtheorem{thm}{Theorem}[section]
\newtheorem{lem}[thm]{Lemma}

\newtheorem{prop}[thm]{Proposition}

\setcounter{section}{0} \theoremstyle{definition}

\newtheorem{defn}[thm]{Definition}

\newtheorem{rem}[thm]{Remark}
\newtheorem{rems}[thm]{Remarks}

\numberwithin{equation}{thm}

\def\ggg{\mathfrak{g}}
\def\sss{\mathfrak{s}}
\def\lll{\mathfrak{l}}
\def\ppp{\mathfrak{p}}
\def\nnn{\mathfrak{n}}
\def\hhh{\mathfrak{h}}

\def\ad{{\rm ad}}
\def\ch{{\rm ch}}
\def\tr{{\rm tr}}
\def\Hom{{\rm Hom}}
\def\chom{{{\mathcal{H}}\rm om}}
\def\Ext{{\rm Ext}}

\def\Soc{{\rm Soc}}

\def\bba{\mathbb A}
\def\A{{\mathfrak{A}}}
\def\bbf{\mathbb F}
\def\bbz{\mathbb Z}

\def\co{\mathcal O}

\begin{document}

\title[Tilting modules and character formulas]{The Category $\mathcal{O}$  for Lie algebras of vector fields (I): Tilting modules and character formulas}

\author{Fei-Fei Duan, Bin Shu and Yu-Feng Yao}

\address{College of Mathematics and Information Science,
Hebei Normal University, Shijiazhuang, Hebei, 050024, China.}\email{duanfeifei0918@126.com}

\address{Department of Mathematics, East China Normal University,
 Shanghai, 200241, China.} \email{bshu@math.ecnu.edu.cn}

\address{Department of Mathematics, Shanghai Maritime University,
 Shanghai, 201306, China.}\email{yfyao@shmtu.edu.cn}

\subjclass[2010]{17B10, 17B66, 17B70}

\keywords{Lie algebras of vector fields, tilting modules, character formulas}

\thanks{This work is partially supported by the National Natural Science Foundation of China (Grant Nos. 11771279, 11671138, and 11601116), the Natural Science Foundation of Shanghai (Grant No. 16ZR1415000), Shanghai Key Laboratory of PMMP (No. 13dz2260400), and the Scientific Research Foundation of Hebei Education Department(No. QN2017090).}

\begin{abstract}
In this article, we exploit the theory of graded module category with semi-infinite character developed by Soergel in \cite{Soe} to study  representations of the infinite dimensional Lie algebras of vector fields $W(n), S(n)$ and $H(n)$ $(n\geq 2)$, and obtain the description of indecomposable tilting modules. The character formulas  for those tilting modules are determined.
\end{abstract}

\maketitle

\section{Introduction}

Associated with an affine algebraic variety $\mathcal{X}$, the Lie algebras of vector fields on $\mathcal{X}$ are basic algebraic objects. When considering the fundamental case $\mathcal{X}=\bba^n$, we have the Lie algebras of vector fields $W(n)$, $S(n)$, $H(n)$ and $K(n)$. Those Lie algebras are  involved in the classification of transitive Lie pseudogroup raised by E. Cartan (cf. \cite{GS}, \cite{Ko}, \cite{KoN}, \cite{SS}, etc.), and also involved in the classification of finite dimensional simple Lie algebras over an algebraically closed filed of prime characteristic (cf. \cite{Kac} and \cite{KS}).

The present paper is the first one of series papers with which we will focus the concern on the representations of $W(n)$, $S(n)$ and $H(n)$ in an analogy of the BGG category for complex semisimple Lie algebras over an algebraically closed field $\bbf$ of characteristic $0$.

Recall that Rudakov studied the irreducible modules of height grater than one over $\widehat{W(n)}$, the derivation algebras of the formal power series in $n$ variables over complex numbers,  along with the other two series of infinite dimensional Lie algebras of Cartan type, i.e., $\widehat{S(n)}$ and $\widehat{H(n)}$, deciding all those irreducible modules (cf. \cite{Rud}, \cite{Rud2}). Parallel to Rudakov's work, Guang-Yu Shen in \cite{Shen} determined all irreducible graded modules for $W(n)$, $S(n)$ and $H(n)$ over complex numbers with aid of his mixed-product methods. What is more, he completely determined all irreducible graded modules for the finite dimensional counterpart over an algebraically closed field of positive characteristic by the same methods.

In the present paper, the motivation is to extend the above-mentioned arguments of $W(n)$, $S(n)$ and $H(n)$ from the point of view of highest weight category, which enable us to find the connection with the classical theory (for example the BGG category of complex semisimple Lie algebras \cite{Hum}). Recall that the infinite dimensional Lie algebra $X(n)$, $X\in\{W,S,H\}$, is endowed with a canonical graded structure
$$X(n)=\sum_{i=-1}^\infty X(n)_{[i]}$$
arising from the gradation of polynomials from  $\bbf[x_1,\cdots,x_n]$, the coordinate ring of $\bba^n_\bbf$. As homogeneous spaces, $X(n)_{[-1]}$ is $\bbf$-spanned by all partial derivations $\partial_i$, $i=1,\cdots,n$, and $X(n)_{[0]}$ is isomorphic to $\mathfrak{gl}(n)$, $\mathfrak{sl}(n)$ or $\mathfrak{sp}(n)$, containing a canonical maximal torus $\hhh$. We consider the subalgebra $B=X(n)_{[-1]}\oplus X(n)_{[0]}$.
Associated with $B$, we introduce a subcategory $\co$ of $X(n)$-module category, an analogue of the BGG category over complex semi-simple Lie algebras, whose objects satisfy the axioms (see Definition \ref{category C}), in the same spirit as in the BGG category. In the classical theory, there are classes of the canonical objects, including the simple ones, standard ones, co-standard ones, and tilting ones. In our category, the simple objects coincide with the ones studied by Shen and Rudakov.  Especially, we will exploit the related theory developed by Sogergel in \cite{Soe} to our case (assuming $n\geq 2$), and obtain the tilting modules and their characters.

According to Cartan's  classification of transitive Lie pseudogroup (cf. \cite{GS}, \cite{Ko}, \cite{KoN}, \cite{SS}, etc.), there is another type of infinite dimensional contact Lie algebras (type $K$) whose structures are quite different from the other three types. We will study their tilting modules  somewhere else.

\section{Preliminaries}
In this paper, we always assume that the ground field $\mathbb{F}$ is algebraically closed, and of characteristic $0$. All vector spaces (modules) are over $\mathbb{F}$.

\subsection{The Lie algebras of vector fields $W(n)$, $S(n)$ and $H(n)$}\label{CartanType}
Let $n$ be a positive integer, and $P_n=\mathbb{F}[x_1,\cdots, x_n]$ be the polynomial algebra of $n$ indeterminants.  Denote by $W(n)$ the Lie algebra of all  derivations  on $P_n$. Then $W(n)$ is a free $P_n$-module with basis $\{\partial_i\mid 1\leq i\leq n\}$, where $\partial_i$ is the partial derivation with respect to $x_i$, i.e., $\partial_i(x_j)=\delta_{ij}$ for $1\leq i,j\leq n$. The natural $\mathbb{Z}$-grading on $P_n$ induces the corresponding $\mathbb{Z}$-grading on $W(n)$, i.e., $W(n)=\bigoplus\limits_{i=-1}^{\infty}W(n)_{[i]}$, where $W(n)_{[i]}=\text{span}_{\mathbb{F}}\{f_i\partial_i\mid 1\leq i\leq n, f_i\in P_n, \text{deg}(f_i)=i+1\}$.

The Lie algebra $S(n)$ of special type is a subalgebra of $W(n)$ consisting of vector fields $\sum_i f_i\partial_i$ with zero divergence, i.e., $S(n)=\{\sum_i f_i\partial_i\in W(n)\mid \sum_i\partial_i(f_i)=0\}$. By the definition, it is easily seen that $S(n)$ is spanned by those elements $D_{ij}(x^{\alpha})$ with $\alpha=(\alpha(1),\cdots, \alpha(n))\in\mathbb{N}^n$,
$x^{\alpha}=x_1^{\alpha(1)}\cdots x_n^{\alpha(n)}$, and $1\leq i<j\leq n$, where $D_{ij}: P_n\longrightarrow P_n$ is a linear mapping defined by $D_{ij}(x^{\alpha})=
\alpha_jx^{\alpha-\epsilon_j}
\partial_i-\alpha_ix^{\alpha-\epsilon_i}\partial_j,\,\forall\,\alpha\in\mathbb{N}^n$,  with $\epsilon_k:=(\delta_{1k},\cdots,\delta_{nk})$ for $k=1,\cdots,n$. Since the divergence operator is a homogeneous operator of degree 0, the algebra $S(n)$ inherits the $\mathbb{Z}$-gradation of $W(n)$. {\sl{Hereafter, we abuse the notation $x^\alpha$ for $\alpha\in \bbz^n$, by making the convention that $x^\alpha=0$ unless $\alpha\in \mathbb{N}^n$.}}

When $n=2r$ is even, the elements in $W(n)$ that annihilate the 2-form $\sum_{i=1}^r dx_i\wedge dx_{i+r}$ are called Hamiltonian.  The Lie algebra $H(n)$ of Hamiltonian type is a subalgebra of $W(n)$ consisting for all Hamiltonian elements in $W(n)$. By the definition,  $H(n)$ has a canonical basis $\{D_H(x^{\alpha})\mid \alpha\in\mathbb{N}^n\setminus\{(0,\cdots, 0)\}\}$, where $D_H: P_n\longrightarrow P_n$ is a linear mapping defined by $D_H(x^{\alpha})=\sum\limits_{i=1}^n\sigma(i)\partial_i(x^{\alpha})\partial_{i^{\prime}}$ with
\begin{equation*}
\sigma(i)=\begin{cases}
1, &\text{if}\,\, 1\leq i\leq r,\cr
-1, &\text{if}\,\, r\leq i\leq n,
\end{cases}
\end{equation*}
and
\begin{equation*}
i^{\prime}=\begin{cases}
i+r, &\text{if}\,\, 1\leq i\leq r,\cr
i-r, &\text{if}\,\, r\leq i\leq n.
\end{cases}
\end{equation*}
Since the 2-form  $\sum_{i=1}^r dx_i\wedge dx_{i+r}$ can be regarded as an operator of degree 2, the algebra $H(n)$ inherits the $\mathbb{Z}$-gradation of $W(n)$.

In the following, let $\ggg=X(n)$, $X\in\{W,S,H\}$. Then $\ggg$ has a $\mathbb{Z}$-gradation $\ggg=\bigoplus\limits_{i=-1}^{\infty}\ggg_{[i]}$, where $\ggg_{[i]}=\ggg\cap W(n)_{[i]}$ for $i\geq -1$. Let $\ggg_i=\bigoplus_{j\geq i}\ggg_{[j]}$. We then have the following $\mathbb{Z}$-filtration of $\ggg$: $$\ggg=\ggg_{-1}\supset \ggg_0\supset\ggg_1\cdots.$$
It should be noted that
\begin{equation}\label{grading 0}
\ggg_{[0]}\cong\begin{cases}
\ggg\lll(n),&\text{if}\,\, \ggg=W(n),\cr
\sss\lll(n),&\text{if}\,\, \ggg=S(n),\cr
\sss\ppp(n),&\text{if}\,\, \ggg=H(n).
\end{cases}
\end{equation}
We have a triangular decomposition $\ggg_{[0]}=\nnn^-\oplus\hhh\oplus\nnn^+$, where
\begin{equation*}
\nnn^{-}=\begin{cases}
\text{span}_{\mathbb{F}}\{x_i\partial_j\mid 1\leq j<i\leq n\}, &\text{if}\,\, \ggg=W(n), S(n),\cr
\text{span}_{\mathbb{F}}\{x_i\partial_j-x_{j+r}\partial_{i+r}, x_{s+r}\partial_{t}+x_{t+r}\partial_{s}\mid 1\leq j<i\leq r, 1\leq s\leq t\leq r\}, &\text{if}\,\, \ggg=H(2r),
\end{cases}
\end{equation*}

\begin{equation*}
\hhh=\begin{cases}
\text{span}_{\mathbb{F}}\{x_i\partial_i\mid 1\leq i\leq n\}, &\text{if}\,\, \ggg=W(n),\cr
\text{span}_{\mathbb{F}}\{x_i\partial_i-x_j\partial_j\mid 1\leq i<j\leq n\}, &\text{if}\,\, \ggg=S(n),\cr
\text{span}_{\mathbb{F}}\{x_i\partial_i-x_{i+r}\partial_{i+r}\mid 1\leq i\leq r\}, &\text{if}\,\, \ggg=H(2r),
\end{cases}
\end{equation*}
and
\begin{equation*}
\nnn^{+}=\begin{cases}
\text{span}_{\mathbb{F}}\{x_i\partial_j\mid 1\leq i<j\leq n\}, &\text{if}\,\, \ggg=W(n), S(n),\cr
\text{span}_{\mathbb{F}}\{x_i\partial_j-x_{j+r}\partial_{i+r}, x_s\partial_{t+r}+x_t\partial_{s+r}\mid 1\leq i<j\leq r, 1\leq s\leq t\leq r\}, &\text{if}\,\, \ggg=H(2r).
\end{cases}
\end{equation*}
The negative root system associated with $\nnn^-$  is denoted by $\Phi^-$.
Let $B=\ggg_{\leq 0}:=\ggg_{[-1]}\oplus\ggg_{[0]}$, and $U(B), U(\ggg)$ be the universal enveloping algebra of $B$ and $\ggg$, respectively.  The $\mathbb{Z}$-gradation on $\ggg$ (resp. $B$) induces a natural $\mathbb{Z}$-gradation on $U(\ggg)$ (resp. $U(B)$), i.e., $U(\ggg)=\bigoplus_{i\in\mathbb{Z}}U(\ggg)_{[i]}$ (resp. $U(B)=\bigoplus_{i\in\mathbb{Z}}U(B)_{[i]}$).

\subsection{Semi-infinite characters} \label{semi-inf}
In general, for a $\bbz$-graded Lie algebra $\ggg=\sum_{i\in\bbz} \ggg_{[i]}$  with $\dim \ggg_{[i]}<\infty$ for all $i\in\bbz$. A character $\gamma: \ggg_{[0]} \rightarrow \bbf$ is called a semi-infinite character for $\ggg$ if the following items satisfy
\begin{itemize}
\item[(SI-1)] As a Lie algebra, $\ggg$ is generated by $\ggg_{[1]}, \ggg_{[0]}$  and $\ggg_{[-1]}$;
\item[(SI-2)] $\gamma([X,Y]) = \tr\big((\ad X \,\ad Y)|_{\ggg_{[0]}})$, $\forall\, X\in\ggg_{[1]}$ and $Y\in \ggg_{[-1]}$.
\end{itemize}

Now we have the following basic observation.
\begin{lem}\label{semi-inf}
Assume $\ggg=X(n), X\in\{W,S,H\}$ with $n\geq 2$. Let $\mathcal{E}_W:\ggg_{[0]}\longrightarrow\mathbb{F}$ be a linear map with $\mathcal{E}_W(x_i\partial_j)=\delta_{ij}$ for $1\leq i,j\leq n$. Let $\mathcal{E}_S=\mathcal{E}_H=0$. Then $\mathcal{E}_X$ is a semi-infinite character for $X\in\{W,S,H\}$.
\end{lem}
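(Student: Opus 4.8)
For axiom (SI-1) the plan is to prove the transitivity statement $[\ggg_{[1]},\ggg_{[k]}]=\ggg_{[k+1]}$ for every $k\ge 1$; generation of $\ggg$ by $\ggg_{[-1]}\oplus\ggg_{[0]}\oplus\ggg_{[1]}$ then follows by an immediate induction on the degree. For $\ggg=W(n)$ I would argue directly with the basis $\{x^\alpha\partial_i\}$: for a target field $x^\gamma\partial_e$ with $|\gamma|=k+2$ carrying at least two units of exponent on indices $\neq e$, the bracket $[x_ax_b\partial_a,\,x^{\gamma-\epsilon_b}\partial_e]$ equals $\gamma(a)\,x^\gamma\partial_e$ (with the variant $[x_a^2\partial_a,\,x^{\gamma-\epsilon_a}\partial_e]$ when the mass sits on a single index $a\neq e$), the cross term vanishing because $a,b\neq e$. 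The only recalcitrant case is a pure power $x_e^{k+2}\partial_e$, which I would reach via the auxiliary identity $[x_e^2\partial_j,\,x_e^{k}x_j\partial_e]=x_e^{k+2}\partial_e-2x_e^{k+1}x_j\partial_j$ for some $j\neq e$ (such $j$ exists precisely because $n\ge 2$), the remaining term being already of the generic type. For $\ggg=H(n)$ the cleanest route is the Poisson model $[D_H(f),D_H(g)]=D_H(\{f,g\})$, reducing (SI-1) to the standard fact that polynomials of degree $\le 3$ generate the positive part of $P_n$ under $\{\,,\}$; the case $\ggg=S(n)$ is entirely analogous using the basis $D_{ij}(x^\alpha)$.

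For axiom (SI-2), note first that both sides are bilinear in $(P,Q)\in\ggg_{[1]}\times\ggg_{[-1]}$, so the task is to compare the two bilinear forms $\phi(P,Q):=\tr\big((\ad P\,\ad Q)|_{\ggg_{[0]}}\big)$ and $\psi(P,Q):=\mathcal{E}_X([P,Q])$. The key step is that both are $\ggg_{[0]}$-invariant. For $\phi$ this follows from the operator identity $\ad[Z,P]\,\ad Q+\ad P\,\ad[Z,Q]=[\ad Z,\,\ad P\,\ad Q]$ on $\ggg_{[0]}$ (valid because $\ad Z$ preserves the grading for $Z\in\ggg_{[0]}$), whose restricted trace vanishes as the trace of a commutator of endomorphisms of the finite-dimensional space $\ggg_{[0]}$. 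For $\psi$ it follows from the Jacobi identity together with the fact that each $\mathcal{E}_X$ kills $[\ggg_{[0]},\ggg_{[0]}]$ --- for $W(n)$ because $\mathcal{E}_W$ is the matrix trace under $\ggg_{[0]}\cong\mathfrak{gl}(n)$, and trivially for $S(n),H(n)$. Hence both $\phi$ and $\psi$ lie in $\Hom_{\ggg_{[0]}}(\ggg_{[1]},\ggg_{[-1]}^*)$, and the whole identity reduces to understanding this Hom-space.

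The remaining input is the $\ggg_{[0]}$-module structure of the local pieces; in all cases $\ggg_{[-1]}^*$ is the natural module $V$. For $W(n)$ one has $\ggg_{[0]}\cong\mathfrak{gl}(n)$ and $\ggg_{[1]}\cong S^2V\otimes V^*\cong\Gamma\oplus V$, with $\Gamma$ the Cartan (traceless) summand; the Hom-space is one-dimensional, so it suffices to do a short direct computation, which gives on basis elements $P=x_ax_b\partial_c,\ Q=\partial_q$ that both $\phi(P,Q)$ and $\psi(P,Q)$ equal $-(\delta_{qa}\delta_{bc}+\delta_{qb}\delta_{ac})$. For $S(n)$ one has $\ggg_{[0]}\cong\mathfrak{sl}(n)$ and $\ggg_{[1]}$ is the kernel of the divergence, i.e. the irreducible Cartan summand $\Gamma\not\cong V$, so $\Hom_{\mathfrak{sl}(n)}(\ggg_{[1]},V)=0$ and both forms vanish, in agreement with $\mathcal{E}_S=0$. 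For $H(n)$ one has $\ggg_{[0]}\cong\mathfrak{sp}(2r)$ and $\ggg_{[1]}\cong S^3V$, which is \emph{irreducible} as a symplectic module, whence $\Hom_{\mathfrak{sp}(2r)}(S^3V,V)=0$ and again both forms vanish, matching $\mathcal{E}_H=0$.

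I expect the genuine content to lie in this last representation-theoretic step rather than in any isolated computation. The subtle point is the symplectic case: one might expect a contraction $S^3V\to V$ to exist, but the invariant form on $V$ is antisymmetric and therefore annihilates symmetric tensors, so no such map is available and $S^3V$ remains irreducible --- this is exactly the structural reason why $\mathcal{E}_H=0$ is forced to be compatible with (SI-2). If one preferred to avoid module theory, the alternative for $H(n)$ is a direct trace computation; I have checked in the rank-one case $H(2)$ that the diagonal contributions cancel so that $\phi\equiv 0$, but this is more laborious and obscures the reason. A final bookkeeping point is to note exactly where $n\ge 2$ enters: in the single-variable step of (SI-1) and in guaranteeing $\ggg_{[1]}\not\cong\ggg_{[-1]}^*$, so that the Hom-space computations come out as claimed.
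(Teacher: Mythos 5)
Your treatment of (SI-2) is a genuinely different route from the paper's, which disposes of that axiom with the single sentence ``by a straightforward calculation.'' Your argument --- both $\phi(P,Q)=\tr\big((\ad P\,\ad Q)|_{\ggg_{[0]}}\big)$ and $\psi(P,Q)=\mathcal{E}_X([P,Q])$ are $\ggg_{[0]}$-invariant bilinear forms, hence elements of $\Hom_{\ggg_{[0]}}(\ggg_{[1]},\ggg_{[-1]}^{*})$, a space that is one-dimensional for $\mathfrak{gl}(n)$ (since $S^2V\otimes V^*\cong\Gamma\oplus V$) and zero for $\mathfrak{sl}(n)$ and $\mathfrak{sp}(2r)$ (since the divergence-free Cartan component, resp.\ $S^3V$, is irreducible and not isomorphic to $V$) --- is correct, and I verified your normalization $\phi(x_ax_b\partial_c,\partial_q)=\psi(x_ax_b\partial_c,\partial_q)=-(\delta_{qa}\delta_{bc}+\delta_{qb}\delta_{ac})$. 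This buys a structural explanation of \emph{why} $\mathcal{E}_S=\mathcal{E}_H=0$ must satisfy (SI-2) (the antisymmetry of the symplectic form killing the contraction $S^3V\to V$ is exactly the point), at the cost of invoking irreducibility facts the paper never needs; the paper's implicit approach is a direct trace computation on basis elements.

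For (SI-1) you follow the same strategy as the paper (establish $\ggg_{[k+1]}\subseteq[\ggg_{[1]},\ggg_{[k]}]$ and induct), but your case division for $W(n)$ has a gap: the monomials $x^\gamma\partial_e$ with $\gamma=m\epsilon_e+\epsilon_a$, $a\neq e$, $m\geq 2$, carry exactly \emph{one} unit of exponent off $e$ and are not pure powers, so they are covered neither by your generic bracket $[x_ax_b\partial_a,x^{\gamma-\epsilon_b}\partial_e]$ (which needs $a,b\neq e$) nor by your auxiliary identity. The fix is one line, e.g.\ $[x_ex_a\partial_e,\,x_e^{m}\partial_e]=(m-1)\,x_e^{m}x_a\partial_e\neq 0$, but as written the claim ``the only recalcitrant case is a pure power'' is false. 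Separately, be aware that ``entirely analogous'' for $S(n)$ conceals real work: the paper must split off a degenerate subcase ($\alpha(k)=2\alpha(l)+1$, where the first-choice bracket $[D_{kl}(x^{\alpha-\epsilon_k}),D_{kl}(x^{2\epsilon_k+\epsilon_l})]$ has vanishing coefficient) and use a different generator there; and for $H(n)$ the ``standard fact'' that degree-$\leq 3$ polynomials Poisson-generate still requires exactly the two-case induction the paper carries out. These are deferrals rather than errors, but the $W(n)$ omission should be repaired before the argument is complete.
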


\begin{proof} Obviously, $\mathcal{E}_X$ is a homomorphism of  Lie algebras from $\ggg_{[0]}$ to the trivial Lie algebra $\bbf$. By a straightforward calculation, it is readily shown that (SI-2)  satisfies for all $X(n)$, $X\in\{W,S,H\}$. We proceed to check (SI-1). Denote by  $\overline\ggg$ the Lie subalgebra generated by $\ggg_{[i]}$ for $i$ running through $\{-1,0,1\}$. We will show $\overline\ggg$ coincides with $\ggg$ case by case.
More precisely, we will show that the homogeneous space $\ggg_{[i]}$ is contained in $\overline\ggg$ for all $i\geq 2$. To this end, we only need to check that
\begin{align}\label{check}
\ggg_{[i]} \mbox{ is included in  }[\ggg_{[i-1]},\ggg_{[1]}],
\end{align}
then it is consequently concluded by induction that $\ggg_{[i]}$ is contained in $\overline\ggg$ for all $i\geq 2$.

(1) $\ggg=W(n)$. Now we take any basis element $X:=x^\alpha\partial_k$ in $\ggg_{[i]}$ ($i\geq 2$) with $\alpha=\sum_{j=1}^n\alpha(j)\epsilon_j$. Then $i\geq 2$ implies that $|\alpha|:=\sum_{j}\alpha(j)>2$. We show (\ref{check}), dividing into different cases.

Case 1: $\alpha=\alpha(k)\epsilon_k$.

We take $l\in \{1,\cdots,n\}$ with $l\ne k$ because $n\geq 2$. Then
\begin{equation*}
x^\alpha\partial_k=\begin{cases}
[x_k^2\partial_l, x_kx_l\partial_k+x_l^2\partial_l]\in [\ggg_{[i-1]},\ggg_{[1]}], &\text{if}\,\, \alpha(k)=3,\cr
\frac{1}{3-\alpha(k)}[x_k^{\alpha(k)-1}\partial_k, x_k^2\partial_k]\in [\ggg_{[i-1]},\ggg_{[1]}], &\text{if}\,\,\alpha(k)>3.
\end{cases}
\end{equation*}

Case 2: $\alpha\ne \alpha(k)\epsilon_k$.

In this case, there exists some $l\in \{1,\cdots,n\}$ with $l\ne k$ such that $\alpha(l)\ne 0$. Then
\begin{equation*}
x^\alpha\partial_k=\begin{cases}
{1\over2} [x^{\alpha-\epsilon_k}\partial_k,x_k^2\partial_k]\in [\ggg_{[i-1]},\ggg_{[1]}], &\text{if}\,\, \alpha(k)=1,\cr
{1\over{1-\alpha(k)}} [x^{\alpha-\epsilon_l}\partial_k,x_kx_l\partial_k]\in [\ggg_{[i-1]},\ggg_{[1]}], &\text{if}\,\,\alpha(k)\ne 1.
\end{cases}
\end{equation*}

(2) $\ggg=S(n)$. We take any generating element $X:=x^{\beta}\partial_k$ or $D_{kl}(x^{\alpha})$ in $\ggg_{[i]}$ ($i\geq 2$) with $\beta=\sum_{j=1}^n\beta(j)\epsilon_j$ and $\alpha=\sum_{j=1}^n\alpha(j)\epsilon_j$  satisfying  $\beta(k)=0$, while $\alpha(k)\neq 0$ and $\alpha(l)\neq 0$.  We show (\ref{check}), dividing into three cases.

Case 1: $X=x^{\beta}\partial_k$ with $\beta(k)=0$.

In this case, we can take $j\in \{1,\cdots,n\}$ with $j\ne k$ and $\beta(j)>0$. Then
$$x^{\beta}\partial_k=\frac{1}{\beta(j)+1}[D_{jk}(x^{\beta-\epsilon_j+\epsilon_k}), x_j^2\partial_k]\in [\ggg_{[i-1]},\ggg_{[1]}].$$

Case 2: $X=D_{kl}(x^{\alpha})$ with $\alpha(k)\neq 2\alpha(l)+1, \alpha(k)\neq 0$, and $\alpha(l)\neq 0$. (Note that the situation when either $\alpha(l)=0$ or $\alpha(k)=0$ turns into Case 1.)

In this case, we have
$$D_{kl}(x^{\alpha})=\frac{1}{2\alpha(l)+1-\alpha(k)}[D_{kl}(x^{\alpha-\epsilon_k}), D_{kl}(x^{2\epsilon_k+\epsilon_l})]\in [\ggg_{[i-1]},\ggg_{[1]}].$$

Case 3: $X=D_{kl}(x^{\alpha})$ with $\alpha(k)=2\alpha(l)+1$, and $\alpha(l)\neq 0$. (Note that the situation when $\alpha(l)=0$ turns into Case 1.)

In this case, we have
$$D_{kl}(x^{\alpha})=-\frac{1}{3\alpha(l)+3}[D_{kl}(x^{\alpha-\epsilon_l}), D_{kl}(x^{2\epsilon_l+\epsilon_k})]\in [\ggg_{[i-1]},\ggg_{[1]}].$$

(3) $\ggg=H(n)$ with $n=2r$. We take any basis element $X:=D_H(x^{\alpha})$ in $\ggg_{[i]}$ ($i\geq 2$) with $\alpha=\sum_{j=1}^n\alpha(j)\epsilon_j$. We show (\ref{check}), dividing into two cases.

Case 1: There exists some $k$ with $1\leq k\leq n$ such that $\alpha(k)\geq 2$.

In this case, it follows from a straightforward computation that
$$D_H(x^{\alpha})=\frac{1}{3\sigma(k)(\alpha(k^{\prime})+1)}[D_H(x^{3\epsilon_k}), D_H(x^{\alpha-2\epsilon_k+\epsilon_{k^{\prime}}})]\in [\ggg_{[i-1]},\ggg_{[1]}].$$

Case 2: $\alpha(k)=\alpha(l)=1$ for some $k, l$ with $1\leq k<l\leq n$, and $\alpha(t)=0$ or $1$ for all $t\neq k, l$.

In this case, it follows from a straightforward computation that

\begin{equation*}
D_H(x^{\alpha})=\begin{cases}
\frac{1}{2\sigma(l)(\alpha(l^{\prime})+1)}[D_H(x^{\epsilon_k+2\epsilon_l}), D_H(x^{\alpha+\epsilon_{l^{\prime}}-\epsilon_k-\epsilon_l})]-&\cr
\;\;\;\frac{2}{3\sigma(k)}[D_H(x^{3\epsilon_l}), D_H(x^{\alpha+2\epsilon_{l^{\prime}}-\epsilon_l-\epsilon_k-\epsilon_{k^{\prime}}})]\in [\ggg_{[i-1]},\ggg_{[1]}], &\text{if}\,\, \alpha(k^{\prime})=1,\cr
\frac{1}{2\sigma(l)(\alpha(l^{\prime})+1)}[D_H(x^{\epsilon_k+2\epsilon_l}), D_H(x^{\alpha+\epsilon_{l^{\prime}}-\epsilon_k-\epsilon_l})]\in [\ggg_{[i-1]},\ggg_{[1]}], &\text{if}\,\,\alpha(k^{\prime})=0.
\end{cases}
\end{equation*}

Summing up, we have proved (\ref{check}).
\end{proof}

\subsection{The category $\mathcal{O}$}  The following notion is an analogy of the BGG category for complex finite dimensional semi-simple Lie algebras.

\begin{defn}\label{category C}
Denote by $\mathcal{O}$ the category, whose objects are additive groups $M$ with the following three properties satisfied.
\begin{itemize}
\item[(1)] $M$ is an admissible $\mathbb{Z}$-graded $\ggg$-module, i.e., $M=\bigoplus\limits_{i\in\mathbb{Z}} M_{[i]}$ with $\dim M_{[i]}<+\infty$, and
$\ggg_{[i]}M_{[j]}\subseteq M_{[i+j]},\forall\,i,j$.
\item[(2)] $M$ is locally finite for $B$. Here $B=\ggg_{\leq 0}:=\ggg_{[-1]}\oplus\ggg_{[0]}$  is defined  as in \S\,\ref{CartanType}.
\item[(3)] $M$ is $\hhh$-semisimple, i.e., $M$ is a weight module: $M=\bigoplus_{\lambda\in\hhh^*}M_{\lambda}$.
\end{itemize}
The morphisms in $\mathcal{O}$ are the $\ggg$-module morphisms that respect the $\mathbb{Z}$-gradation, i.e.,
$$\Hom_{\mathcal{O}}(M, N)=\{f\in\Hom_{U(\ggg)}(M, N)\mid f(M_{[i]})\subseteq N_{[i]},\,\forall\,i\in\mathbb{Z}\},\,\forall\,M,N\in\mathcal{O}.$$
\end{defn}

\begin{rems}\label{rem for C}
\begin{itemize}
\item[(1)] It is readily shown that the category $\mathcal{O}$ is an abelian category.
\item[(1)] Denote by $\co_{B\mbox{-}{\rm fin}}^{0}$ (resp. $\co_{\ggg_{[0]}\mbox{-}{\rm fin}}^{0}$)  the category of locally finite $B$-modules (resp. $\ggg_{[0]}$-modules). Then any irreducible module $M\in \co_{B\mbox{-}{\rm fin}}^{0}$ is finite dimensional and is a simple module in $\co_{\ggg_{[0]}\mbox{-}{\rm fin}}^{0}$ with trivial $\ggg_{[-1]}$-action, and vice versa.
\item[(2)] It follows from Definition \ref{category C} (ii) that $M|_{\ggg_{[0]}}\in\co_{\ggg_{[0]}\mbox{-}{\rm fin}}^{0}$ for any module $M$ in $\mathcal{O}$. Since $M$ is $\hhh$-semisimple, it is easy to see that $M|_{\ggg_{[0]}}$ is semisimple.
\end{itemize}
\end{rems}

Similar to \cite[Lemma 5.8]{Soe}, we have the following parallel result.

\begin{lem}
There are enough injectives in $\mathcal{O}$.
\end{lem}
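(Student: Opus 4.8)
The plan is to embed every object of $\co$ into an injective one by means of a coinduction functor twisted by the semi-infinite character $\mathcal{E}_X$ of Lemma \ref{semi-inf}, in parallel with \cite[Lemma 5.8]{Soe}. The skeleton is the familiar one: restriction to $B$ is exact, its right adjoint coinduction therefore preserves injectives, and it remains to feed it an injective object of the base category and to check that the output really lies in $\co$.

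I would first note that the base category $\co_{B\text{-}{\rm fin}}^{0}$ of locally finite $B$-modules has enough injectives. By Remark \ref{rem for C} its simple objects are the simple $\ggg_{[0]}$-modules $L(\lambda)$ with trivial $\ggg_{[-1]}$-action; since $\ggg_{[0]}$ is reductive and $\ggg_{[-1]}$ is a finite-dimensional abelian ideal, the locally nilpotent coinduction $\Hom_{U(\ggg_{[0]})}(U(B),L(\lambda))$ along $\ggg_{[-1]}$ supplies an injective hull of each $L(\lambda)$, so that every object of $\co_{B\text{-}{\rm fin}}^{0}$ embeds into an injective one.

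The decisive step is the passage from $B$ to $\ggg$. Writing $\ggg=B\oplus\ggg_{+}$ with $\ggg_{+}=\bigoplus_{i\ge 1}\ggg_{[i]}$ and using the PBW identification $U(\ggg)\cong U(\ggg_{+})\otimes U(B)$, the naive graded coinduction $\Hom_{U(B)}(U(\ggg),-)$ has its degree-$n$ component isomorphic to $\Hom_{\bbf}(U(\ggg_{+})_{[d-n]},V)$ when $V$ is concentrated in degree $d$. Because $U(\ggg_{+})$ occupies all non-negative degrees, this is supported in degrees $n\le d$ and $\ggg_{[-1]}$ fails to act locally nilpotently, so the naive coinduced module is \emph{not} locally $B$-finite and does not belong to $\co$. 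The remedy is to twist by $\mathcal{E}_X$: one tensors by the one-dimensional character and realigns the $\ggg$-action through the semiregular $U(\ggg)$-bimodule attached to $\mathcal{E}_X$, whose existence is guaranteed precisely by axioms (SI-1) and (SI-2) verified in Lemma \ref{semi-inf}. I would show that the twisted coinduction $\mathrm{Coind}_{\mathcal{E}_X}$ lands in $\co$ and is still right adjoint to the exact functor $\mathrm{Res}$, hence preserves injectivity. Granting this, for $M\in\co$ one embeds $\mathrm{Res}\,M\hookrightarrow I$ into an injective $B$-module, and the adjunction unit yields a monomorphism $M\hookrightarrow\mathrm{Coind}_{\mathcal{E}_X}(I)$ into an injective object of $\co$.

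The main obstacle is exactly this twisted construction. The grading is one-sided---there is no component $\ggg_{[k]}$ for $k\le -2$---so there is no Chevalley-type anti-automorphism exchanging $\ggg_{[k]}$ with $\ggg_{[-k]}$, and the naive graded dual or coinduction of a bounded-below module is bounded above and loses local $B$-finiteness. The function of the semi-infinite character is to provide the compensating ``semi-infinite'' degree shift that restores a bounded-below, locally $B$-finite, admissible module; the crux of the proof is to verify, using the above PBW description of the graded pieces together with the balancing relation (SI-2), that the twisted action obeys the module axioms and that admissibility, $\hhh$-semisimplicity and local $B$-finiteness all hold for the output. Once this is in place, every remaining step---exactness of $\mathrm{Res}$, the adjunction, preservation of injectives, and the conversion of the embedding $\mathrm{Res}\,M\hookrightarrow I$ into $M\hookrightarrow\mathrm{Coind}_{\mathcal{E}_X}(I)$---is purely formal.
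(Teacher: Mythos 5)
Your overall strategy---exactness of restriction plus a right adjoint coinduction that preserves injectives---has the right shape, but you coinduce from the wrong subalgebra, and the device you invoke to repair that choice does not supply what you need. The paper itself offers no argument beyond citing \cite[Lemma 5.8]{Soe}; the proof actually available here, and the one the paper uses implicitly in the next section, coinduces from the \emph{non-negative} part $\ggg_0=\bigoplus_{j\ge 0}\ggg_{[j]}$, not from $B=\ggg_{\le 0}$. Indeed $\chom_{U(\ggg_0)}(U(\ggg),V)\cong\chom_{\bbf}(U(\ggg_{[-1]}),V)$ is admissible, bounded below, $\hhh$-semisimple and locally $B$-finite for elementary degree reasons (for $V$ simple this is exactly the costandard module $\nabla$), and the adjunction $\Hom_{\co_{\ge d}}(M,\chom_{U(\ggg_0)}(U(\ggg),V))\cong\Hom_{U(\ggg_{[0]})}(M_{[d]},V)$ together with the semisimplicity of $M|_{\ggg_{[0]}}$ gives injectivity of these objects directly. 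No semi-infinite character is needed for this lemma; $\mathcal{E}_X$ enters only later, in the tilting multiplicity formula.

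The gap is concentrated in the sentence ``I would show that the twisted coinduction $\mathrm{Coind}_{\mathcal{E}_X}$ lands in $\co$ and is still right adjoint to $\mathrm{Res}$.'' That assertion is the entire content of the proof and is not established; moreover it is not what the semiregular bimodule attached to (SI-1)--(SI-2) provides. That bimodule yields a functor exchanging (co)induction from $\ggg_{\le 0}$ with (co)induction from $\ggg_{\ge 0}$ up to a twist by $\mathcal{E}_X$---the engine behind \cite[Theorem 5.12]{Soe}, i.e.\ Proposition \ref{soegrel formula}---but it is not a right adjoint to restriction to $B$ on $\co$, so the claim that it ``preserves injectives'' has no formal basis. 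Finally, even granting some right adjoint, the last step is not ``purely formal'' in the admissible setting: the evaluation unit $m\mapsto(u\mapsto um)$ does not land in the $\chom$-part of $\Hom_{U(B)}(U(\ggg),M)$ when $M$ is unbounded above, so the monomorphism $M\hookrightarrow\mathrm{Coind}_{\mathcal{E}_X}(I)$ you need at the end is precisely one of the points that would have to be proved rather than quoted. As written, the proposal does not prove the lemma; replacing coinduction from $B$ by coinduction from $\ggg_0$ removes both the need for the twist and the failure of local $B$-finiteness you correctly identified.
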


\section{Standard modules}
\subsection{}\label{basic notations}
Keep notations as before, in particular, $\ggg=\bigoplus\limits_{i=-1}^{\infty}\ggg_{[i]}$ is one of the Lie algebras of vector fields $W(n), S(n)$ and $H(n)$, and $\hhh$ is the standard Cartan subalgebra of $\ggg_{[0]}$ (recall $\ggg_{[0]}\cong \mathfrak{gl}(n)$ for $W(n)$, $\mathfrak{sl}(n)$ for $S(n)$ and $\mathfrak{sp}(n)$ for $H(n)$ under the isomorphism correspondence $W_{[0]}\rightarrow \mathfrak{gl}(n)$ with $x_i\partial_j\mapsto E_{ij}$). Denote by $\epsilon_i$ the linear function on $\sum_{j=1}^n\bbf x_j\partial_j$ via defining
  $\epsilon_i(x_j\partial_j)=\delta_{ij}$ for $1\leq i,j\leq n$. In the natural sense, we  identify the unit function $\epsilon_i$ with $(\delta_{1i},\cdots, \delta_{ni})$ for $1\leq i\leq n$.  With those unit linear functions, we can express the weight functions that we need for the arguments on $\ggg_{[0]}$-modules in the sequent.
   Let $\Lambda^{-}$ be the set of anti-dominant integral weights relative to the standard Borel subalgebra $\hhh+\nnn^+$ of $\ggg_{[0]}$, which means that $\lambda\in \Lambda^{-}$ if and only if $-\lambda$ is a dominant integral weight in the sense of \cite{Hum1}. Then finite dimensional irreducible $\ggg_{[0]}$-modules are parameterized by $\Lambda^-\times\mathbb{Z}$. For any $\lambda\in \Lambda^-$, let $L_0^{-}(\lambda)_d$ be the simple $\ggg_{[0]}$-module concentrated in a single degree $d$ with the lowest weight $\lambda$.  Set $\Delta(\lambda)_d=U(\ggg)\otimes_{U(B)}L_0^{-}(\lambda)_d$,  where $L_0^{-}(\lambda)_d$ is regarded as a $B$-module with trivial $\ggg_{[-1]}$-action. Then $\{\Delta(\lambda)_d\mid \lambda\in \Lambda^-,d\in\mathbb{Z}\}$ constitute a class of so-called standard modules for $\ggg$ in the usual sense. We have the following result.
\begin{lem}\label{lem1}
Let $\lambda\in\Lambda^-, d\in\mathbb{Z}$. The following statements hold.
\begin{itemize}
\item[(1)] The standard module $\Delta(\lambda)_d$ is an object in $\mathcal{O}$.
\item[(2)] The standard module $\Delta(\lambda)_d$ has a unique irreducible quotient, denoted by $L(\lambda)_d$.
\item[(3)] The iso-classes of irreducible modules in $\mathcal{O}$ are parameterized by $\Lambda^-\times \mathbb{Z}$. More precisely, each simple module $S$ in $\mathcal{O}$ is of the form $L(\mu)$ for some $\mu\in \Omega$ with the depth $d$ of $S$ defined by $S=\sum\limits_{i\geq d}S_{[i]}$, where $S_{[d]}\neq 0$.
\end{itemize}
\end{lem}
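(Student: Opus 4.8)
The strategy is the standard three-step analysis of induced modules in a highest-weight category, carried out here for the generalized Verma (standard) modules $\Delta(\lambda)_d = U(\ggg)\otimes_{U(B)} L_0^{-}(\lambda)_d$. By the PBW theorem, since $\ggg = \ggg_1 \oplus B$ as vector spaces (where $\ggg_1 = \bigoplus_{j\geq 1}\ggg_{[j]}$ is the positive part and $B = \ggg_{[-1]}\oplus\ggg_{[0]}$), we have $\Delta(\lambda)_d \cong U(\ggg_1)\otimes_\bbf L_0^{-}(\lambda)_d$ as graded vector spaces. I will exploit this identification throughout.

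For part (1), I must verify the three axioms of Definition \ref{category C}. The $\bbz$-grading comes from tensoring the grading of $U(\ggg_1)$ (whose degree-$j$ piece is built from products of positive homogeneous components) with the degree-$d$ space $L_0^{-}(\lambda)_d$; each graded piece $\Delta(\lambda)_{d,[j]}$ is finite-dimensional because each $\ggg_{[i]}$ is finite-dimensional and there are only finitely many ways to write $j-d$ as a sum of positive degrees, giving admissibility (axiom 1). For local finiteness over $B$ (axiom 2): any element of $\Delta(\lambda)_d$ lies in a finite sum of graded pieces, and since $B$ lowers or preserves degree (as $\ggg_{[-1]}$ has degree $-1$ and $\ggg_{[0]}$ degree $0$), the $B$-submodule it generates is supported in finitely many degrees, hence finite-dimensional. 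For $\hhh$-semisimplicity (axiom 3): $L_0^{-}(\lambda)_d$ is a weight module and $U(\ggg_1)$ is spanned by $\hhh$-weight vectors under the adjoint action, so $\Delta(\lambda)_d$ decomposes into $\hhh$-weight spaces.

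For part (2), the key observation is that $\Delta(\lambda)_d$ is generated by its bottom piece $L_0^{-}(\lambda)_d$ in degree $d$, and any proper graded submodule $N$ must have trivial intersection with this bottom piece (otherwise, since $L_0^{-}(\lambda)_d$ is a simple $\ggg_{[0]}$-module, the intersection would be all of $L_0^{-}(\lambda)_d$, forcing $N = \Delta(\lambda)_d$). Therefore the sum of all proper graded submodules is again proper (it still avoids the bottom degree $d$), giving a unique maximal proper graded submodule and hence a unique irreducible quotient $L(\lambda)_d$. I would make the finite-dimensionality and simplicity of $L_0^{-}(\lambda)_d$ explicit via Remark \ref{rem for C}, which identifies irreducible $B$-modules with irreducible $\ggg_{[0]}$-modules carrying trivial $\ggg_{[-1]}$-action.

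For part (3), given any simple $S\in\mathcal{O}$, let $d$ be its depth so $S_{[d]}\neq 0$ while $S_{[i]} = 0$ for $i<d$. Then $\ggg_{[-1]}$ annihilates $S_{[d]}$ (it would land in degree $d-1 = 0$), so $S_{[d]}$ is a $B$-submodule, and by Remark \ref{rem for C} it contains an irreducible $\ggg_{[0]}$-submodule of some lowest weight $\lambda\in\Lambda^-$ with trivial $\ggg_{[-1]}$-action; by Frobenius reciprocity $\Hom_\mathcal{O}(\Delta(\lambda)_d, S)\neq 0$, and since $S$ is simple this map is surjective, whence $S\cong L(\lambda)_d$ by part (2). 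The main obstacle I anticipate is confirming that $S_{[d]}$ is \emph{irreducible} as a $\ggg_{[0]}$-module (so that $\lambda$ and $d$ are uniquely determined by $S$): this requires showing that the simplicity of $S$ as a graded $\ggg$-module forces the bottom layer to be $\ggg_{[0]}$-irreducible, which I would argue by taking the $\ggg$-submodule generated by any proper $\ggg_{[0]}$-submodule of $S_{[d]}$ and checking it again avoids the remainder of $S_{[d]}$, contradicting simplicity. This pins down the parametrization by $\Lambda^-\times\bbz$ and completes the claim.
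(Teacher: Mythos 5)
Your proposal is correct and follows essentially the same route as the paper: the PBW identification $\Delta(\lambda)_d\cong U(\ggg_1)\otimes_{\bbf}L_0^{-}(\lambda)_d$ with degree bounds for axioms (1)--(3), the observation that every proper (graded) submodule avoids the bottom layer for part (2), and the extraction of an irreducible $B$-submodule isomorphic to some $L_0^{-}(\mu)_l$ plus Frobenius reciprocity to realize any simple object as a quotient of a standard module for part (3). Your extra care in checking that $S_{[d]}$ is $\ggg_{[0]}$-irreducible is a harmless (and slightly more explicit) variant of the paper's choice of an arbitrary finite-dimensional irreducible $U(B)$-submodule.
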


\begin{proof}

(1) As a vector space, $\Delta(\lambda)_d=U(\ggg_1)\otimes_{\mathbb{F}} L_0^{-}(\lambda)_d$. Let $v=\sum\limits_{i\in I} v_{i+d}\in\Delta(\lambda)_d$, where $I\subset\mathbb{Z}_{\geq 0}$ is a finite index set, $v_{i+d}=\sum\limits_{s} u_s\otimes w_{i,s}$ with $w_{i,s}\in L_0^{-}(\lambda)_d$ and $\{u_s\}$ is a  basis of $U(\ggg_1)_{[i]}$. To show that $\Delta(\lambda)_d\in\mathcal{O}$, we need to prove that $U(B)v_{i+d}$ are finite-dimensional for any nonzero homogeneous vector $v_{i+d}$. For that, on one hand, since
$$\ggg_{[0]}v_{i+d}=\ggg_{[0]}\sum\limits_{s} u_s\otimes w_{i,s}\subset\sum\limits_{s} U(\ggg_1)_{[i]}\otimes w_{i,s}+\sum\limits_{s} u_s\otimes \ggg_{[0]}w_{i,s}
\subset U(\ggg_1)_{[i]}\otimes L_0^{-}(\lambda),$$
we get $U(\ggg_{[0]})v_{i+d}\in U(\ggg_1)_{[i]}\otimes L_0^{-}(\lambda)$. This implies that $U(\ggg_{[0]})v_{i+d}$ is finite-dimensional. On the other hand, by a similar argument, we have
$$U(B)v_{i+d}\subset U(\ggg_{[-1]})U(\ggg_1)_{[i]}\otimes L_0^{-}(\lambda)\subset \sum\limits_{0\leq s\leq i}U(\ggg_1)_{[s]}\otimes L_0^{-}(\lambda).$$
This implies that $U(B)v_{i+d}$ is finite-dimensional.

(2) Any proper submodule of $\Delta(\lambda)_d$ is contained in $\ggg_{1}U(\ggg_1)\otimes L_0^{-}(\lambda)_d$, so is the sum of all proper submodules of $\Delta(\lambda)_d$. Hence, the sum of all proper submodules is the unique maximal submodule of $\Delta(\lambda)_d$, i.e.,  $\Delta(\lambda)_d$ has a unique simple quotient.

(3) Let $S$ be any irreducible module in $\mathcal{O}$. Since $S$ is $U(B)$ locally finite, we can take a finite dimensional irreducible $U(B)$-submodule $S_1$. It follows from Remark \ref{rem for C} that $\ggg_{[-1]}$ acts trivially on $S_1$, and $S_1$ is a finite dimensional irreducible $U(\ggg_{[0]})$-module. Hence, $S_1$ is isomorphic to $L_0^{-}(\mu)_l$ for some $\mu\in\Lambda^{-}, l\in\mathbb{Z}$. Consequently, $S$ is a quotient of $\Delta(\mu)_l$. Then it follows from the statement (2) that $S\cong L(\mu)_l$. Moreover, since $S=U(\ggg)v=U(\ggg_1)U(B)v$ and $\dim U(B)v<\infty$ for any nonzero $v\in S$, we know that there exists a unique integer $d$ such that $S=\sum\limits_{i\geq d} S_{[i]}$ with $S_{[d]}\neq 0$.
\end{proof}

\begin{rem}
 We usually write $\Delta(\lambda)_0$ (resp. $L_0^{-}(\lambda)_0$, $L(\lambda)_0$) as  $\Delta(\lambda)$ (resp. $L_0^{-}(\lambda)$, $L(\lambda)$) for brevity.
\end{rem}

\subsection{Depths}  An integer $d$ appearing in Lemma \ref{lem1}(3) is  called the depth of $S$. In general, for $M\in\co$ with  $M=\sum\limits_{i\geq d} M_{[i]}$,  we say that $M$ admits  depth $d$ if  $M_{[d]}\neq 0$, but $M_{[j]}=0$ for $j<d$.
Set $\mathcal{O}_{\geq d}:=\{M\in\mathcal{O}\mid M=\sum\limits_{i\geq d}M_{[i]}\}$.

The translation functor $T[-d]:\mathcal{O}_{\geq d}\longrightarrow\mathcal{O}_{\geq 0}$ relates $\mathcal{O}_{\geq d}$ and $\mathcal{O}_{\geq 0}$, so that we only need to focus on $\mathcal{O}_{\geq 0}$ when we make arguments on module structure. In the following, we always assume that $\Delta(\lambda), L(\lambda)$ are objects in $\mathcal{O}_{\geq 0}$, which implies that  $L_0^{-}(\lambda)$ falls in the grading-zero component. (However, the study involving depths is still very import  when we consider the topics related to the iso-classes of irreducible modules, for example the blocks of the category $\co$, which will be seen in our sequent paper.)

\section{Costandard modules and their prolonging realization}
Keep the same notations as in the previous sections. For a $\mathbb{Z}$-graded algebra $\A$ and $\mathbb{Z}$-graded modules $M$ and $N$ over $\A$, define the set of admissible $\A$-homomorphisms as follows.
$$\chom_{\A}(M,N):=\{f\in\Hom_{\A}(M, N)\mid \text{there\,exists\,some}\,i\in\mathbb{Z}\,\text{such\,that}\, f(M_{[j]})=0,\,\forall\,j\neq i\}.$$
\subsection{Costandard modules} Let $\lambda\in\Lambda^-$. Define the costandard $\ggg$-module corresponding to $\lambda$ as
\begin{eqnarray*}
\nabla(\lambda)&:=&\chom_{U(\ggg_0)}(U(
\ggg), L_0^{-}(\lambda))\cr
&=&\{f\in\Hom_{U(\ggg_0)}(U(
\ggg), L_0^{-}(\lambda))\mid \text{there\,exists\,some}\,i\in\mathbb{Z}\,\text{such\,that}\, f(U(\ggg)_{[j]})=0,\,\forall\,j\neq i\},
\end{eqnarray*}
where $L_0^{-}(\lambda)$ is regarded as a $\ggg_{0}$-module with trivial $\ggg_1$-action.
Then for any $i\in\mathbb{Z}$, set
\begin{eqnarray*}
\nabla(\lambda)_{[i]}=\{f\in\Hom_{U(\ggg_0)}(U(
\ggg), L_0^{-}(\lambda))\mid f(U(\ggg)_{[j]})=0,\,\forall\,j\neq -i\}.
\end{eqnarray*}
it is readily known that $\nabla(\lambda)=\bigoplus_{i\in\mathbb{Z}}\nabla(\lambda)_{[i]}$ with $\dim\nabla(\lambda)_{[i]}<\infty$ for any $i\in\mathbb{Z}$, and $\nabla(\lambda)_{[j]}=0$ for any $j<0$. Hence $\nabla(\lambda)\in\mathcal{O}_{\geq 0}$. We have the following result. 
\begin{lem}
Let $\lambda,\mu\in\Lambda^-$, then the following statements hold.
\begin{itemize}
\item[(1)] $L(\lambda)$ admits a projective cover $\Delta(\lambda)$ in $\mathcal{O}_{\geq 0}$.
\item[(2)] $L(\lambda)$ admits an injective hull $\nabla(\lambda)$ in $\mathcal{O}_{\geq 0}$.
\item[(3)] $\Hom_{\mathcal{O}_{\geq 0}}(\Delta(\lambda), \nabla(\mu))=0$ if $\lambda\neq \mu$.
\item[(4)] $\Ext_{\mathcal{O}_{\geq 0}}^1(\Delta(\lambda), \nabla(\mu))=0$ for any $\lambda,\mu$.
\end{itemize}
\end{lem}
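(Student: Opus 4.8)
The plan is to establish the four statements using the standard machinery of highest weight categories, exploiting the adjunctions built into the definitions of $\Delta(\lambda)$ and $\nabla(\lambda)$ together with the structural facts proved in Lemma \ref{lem1}. For part (1), the key point is that $\Delta(\lambda)=U(\ggg)\otimes_{U(B)}L_0^-(\lambda)$ is defined by induction from $B$, so for any $M\in\mathcal{O}_{\geq 0}$ there is a tensor--hom adjunction
$$\Hom_{\mathcal{O}_{\geq 0}}(\Delta(\lambda),M)\cong\Hom_{B}(L_0^-(\lambda),M_{[0]}),$$
where the right side sees only the degree-zero part. Since $L_0^-(\lambda)$ is a simple $\ggg_{[0]}$-module with trivial $\ggg_{[-1]}$-action and $M|_{\ggg_{[0]}}$ is semisimple (Remark \ref{rem for C}), this $\Hom$-space is exact in $M$ in the relevant sense, which gives projectivity of $\Delta(\lambda)$ in $\mathcal{O}_{\geq 0}$. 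That $\Delta(\lambda)$ is a \emph{cover} of $L(\lambda)$ follows from Lemma \ref{lem1}(2): $\Delta(\lambda)$ has $L(\lambda)$ as its unique simple quotient and its radical lies in strictly positive degrees, so the surjection $\Delta(\lambda)\twoheadrightarrow L(\lambda)$ is essential.

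Part (2) is the dual statement, and I would prove it by the dual adjunction coming from the coinduced construction $\nabla(\lambda)=\chom_{U(\ggg_0)}(U(\ggg),L_0^-(\lambda))$. Here the relevant isomorphism is
$$\Hom_{\mathcal{O}_{\geq 0}}(M,\nabla(\lambda))\cong\Hom_{\ggg_{[0]}}(M_{[0]},L_0^-(\lambda)),$$
so that injectivity of $\nabla(\lambda)$ reduces again to the semisimplicity of $M|_{\ggg_{[0]}}$ and the simplicity of $L_0^-(\lambda)$. The socle computation needed to identify $\nabla(\lambda)$ as the injective hull of $L(\lambda)$ amounts to checking that $\nabla(\lambda)_{[0]}\cong L_0^-(\lambda)$ and that every nonzero submodule meets degree zero, which follows from the grading bound $\nabla(\lambda)_{[j]}=0$ for $j<0$ established just before the lemma.

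For part (3), I would combine the two adjunctions. A morphism $\Delta(\lambda)\to\nabla(\mu)$ is determined by its effect in degree zero, and both adjunctions collapse the computation to
$$\Hom_{\mathcal{O}_{\geq 0}}(\Delta(\lambda),\nabla(\mu))\cong\Hom_{\ggg_{[0]}}(L_0^-(\lambda),L_0^-(\mu)),$$
which is zero when $\lambda\neq\mu$ by Schur's lemma, since $L_0^-(\lambda)$ and $L_0^-(\mu)$ are non-isomorphic simple $\ggg_{[0]}$-modules. Part (4) is the crucial homological vanishing and I expect it to be the main obstacle. The natural approach is to show that $\Delta(\lambda)$ is projective and $\nabla(\mu)$ is injective in $\mathcal{O}_{\geq 0}$ — which is exactly what (1) and (2) supply — so that $\Ext^1$ vanishes automatically for any pair $(\lambda,\mu)$, regardless of whether $\lambda=\mu$. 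The delicate point is that projectivity of $\Delta(\lambda)$ must hold in the full category $\mathcal{O}_{\geq 0}$ and not merely against costandard modules; this requires verifying that the adjunction above is genuinely exact on all of $\mathcal{O}_{\geq 0}$, which in turn rests on the local finiteness axiom and the fact (Remark \ref{rem for C}(2)) that restriction to $\ggg_{[0]}$ sends every object of $\mathcal{O}$ to a semisimple module. Once that exactness is secured, (4) is immediate; the real work is confirming the exactness of induction and coinduction in this graded, admissible setting, paralleling Soergel's treatment in \cite{Soe}.
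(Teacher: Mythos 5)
Your proposal follows essentially the same route as the paper: parts (1) and (2) rest on the induction/coinduction adjunctions that reduce $\Hom$ out of $\Delta(\lambda)$ and into $\nabla(\mu)$ to degree-zero $\ggg_{[0]}$-Hom spaces (with projectivity/injectivity coming from finite-dimensionality and semisimplicity of $M_{[0]}$ over $\ggg_{[0]}$), part (3) is the composite of the two adjunctions plus Schur's lemma, and part (4) is immediate from the projectivity established in (1) -- exactly as in the paper.

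One justification in your part (2) is off, however. The claim that every nonzero submodule of $\nabla(\lambda)$ meets degree zero does \emph{not} follow from the grading bound $\nabla(\lambda)_{[j]}=0$ for $j<0$: that bound is perfectly consistent with a submodule concentrated in strictly positive degrees, so by itself it gives no control on the socle. What is actually needed is that $\ggg_{[-1]}$ has no nonzero invariants in $\nabla(\lambda)$ in positive degree, which the paper extracts from the $B$-module isomorphism $\nabla(\lambda)\cong\chom_{U(\ggg_{[0]})}(U(B),\mathbb{F})\otimes_{\mathbb{F}}L_0^{-}(\lambda)$ (equivalently, from the prolonging realization $P_n\otimes L_0^{-}(\lambda)$, where $\ggg_{[-1]}$ acts by partial derivatives on the first factor); this identifies the $B$-socle, and hence the $\ggg$-socle, as $L_0^{-}(\lambda)$. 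With that repair the remainder of your argument matches the paper's proof.
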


\begin{proof}

(1) Take any $M\in\mathcal{O}_{\geq 0}$, we then have
\begin{eqnarray*}
\Hom_{\mathcal{O}_{\geq 0}}(\Delta(\lambda), M)&=&\Hom_{\mathcal{O}_{\geq 0}}(U(\ggg)\otimes_{U(B)} L_0^{-}(\lambda), M)\\
&=&\Hom_{U(\ggg_{[0]})}(L_0^{-}(\lambda), M_{[0]}).
\end{eqnarray*}
This implies that $\Delta(\lambda)$ is projective in $\mathcal{O}_{\geq 0}$ by Remarks\ref{rem for C}, since $\dim M_{[0]}<\infty$.
Moreover, it follows from the proof of Lemma \ref{lem1}(ii) that $\Delta(\lambda)$ has a unique maximal submodule. Hence, $\Delta(\lambda)$ is indecomposable. Moreover, $\dim\Hom_{\mathcal{O}_{\geq 0}}(\Delta(\lambda), \Delta(\lambda))<\infty$ and $\mathcal{O}_{\geq 0}$ is an abelian category. Consequently, $\Delta(\lambda)$ is the projective cover of $L(\lambda)$ in $\mathcal{O}_{\geq 0}$ by \cite[Lemma 3.3]{Soe}.

(2) We note that
\begin{eqnarray*}
\nabla(\lambda)&=&\chom_{U(\ggg_0)}(U(\ggg),L_0^{-}(\lambda))\\
&\cong&\chom_{U(\ggg_{[0]})}(U(B),L_0^{-}(\lambda))\\
&\cong&\chom_{U(\ggg_{[0]})}(U(B),\mathbb{F})\otimes_{\mathbb{F}}L_0^{-}(\lambda).
\end{eqnarray*}
Hence, as a $B$-module, the socle of $\nabla(\lambda)$ is isomorphic to $L_0^{-}(\lambda)$. Consequently, $\Soc_{U(\ggg)}(\nabla(\lambda))\cong L(\lambda)$. Moreover, for any $M\in\mathcal{O}_{\geq 0}$, we have
\begin{eqnarray*}
\Hom_{\mathcal{O}_{\geq 0}}(M, \nabla(\lambda))&=&\Hom_{\mathcal{O}_{\geq 0}}(M, \chom_{U(\ggg_0)}(U(\ggg),L_0^{-}(\lambda)))\\
&=&\chom_{U(\ggg_{0})}(M,L_0^{-}(\lambda))\\
&=&\chom_{U(\ggg_{[0]})}(M,L_0^{-}(\lambda)).
\end{eqnarray*}
It follows that $\nabla(\lambda)$ is injective in $\mathcal{O}_{\geq 0}$. Hence, $\nabla(\lambda)$ is the injective hull of $L(\lambda)$.

(3) \begin{eqnarray*}
\Hom_{\mathcal{O}_{\geq 0}}(\Delta(\lambda), \nabla(\mu))&=&\Hom_{\mathcal{O}_{\geq 0}}(U(\ggg)\otimes_{U(B)}L_0^{-}(\lambda),\nabla(\mu))\\
&=&\Hom_{U(\ggg_{[0]})}(L_0^{-}(\lambda),L_0^{-}(\mu))\\
&=&\delta_{\lambda\mu}.
\end{eqnarray*}

(4) It follows form the statement (1), since $\Delta(\lambda)$ is projective in $\mathcal{O}_{\geq 0}$.
\end{proof}

\subsection{Prolonging realization} In this subsection, we introduce  a kind realization of costandard modules $\nabla(\lambda)$ for $\lambda\in\Lambda^-$ via prolonging $L_0^{-}(\lambda)$ as below. Set $\mathcal{V}(\lambda)=P_n\otimes L_0^{-}(\lambda)$ for $\lambda\in\Lambda^-$. It follows from \cite[Theorem 2.1]{Skr}  that we can endow with a $W(n)$-module structure $\rho_{_{W(n)}}$ on $\mathcal{V}(\lambda)$ via
\begin{equation}\label{W(n)-module structure}
\rho_{_{W(n)}}(\sum\limits_{i=1}^n f_i\partial_i)(g\otimes v)=\sum\limits_{i=1}^n f_i(\partial_i(g))\otimes v+\sum\limits_{i=1}^n\sum\limits_{j=1}^n (\partial_j(f_i))g\otimes \xi(x_j\partial_i)v
\end{equation}
for any $f_i,g\in P_n, v\in L_0^{-}(\lambda)$,
where $\xi$ is the representation of the $W(n)_{[0]}$-module $L_0^{-}(\lambda)$. Furthermore, it is a routine to check that we have a $\ggg$-module structure
on $\mathcal{V}(\lambda)$ for $\ggg\in \{S(n), H(n)\}$,
via:
\begin{eqnarray}\label{S(n)-module}
\rho_{\ggg}(D_{kl}(x^{\alpha})) (g\otimes v)&=&(D_{kl}(x^{\alpha}))(g)\otimes v+\alpha(k)\alpha(l)x^{\alpha-\epsilon_k-\epsilon_l}g\otimes\xi(x_k\partial_k-x_l\partial_l)v\\
&&+\sum\limits_{\stackrel{j=1}{j\neq k}}^n
\alpha(l)(\alpha(j)-\delta_{jl})x^{\alpha-\epsilon_j-\epsilon_l}g\otimes\xi(x_j\partial_k)v\nonumber\\
&&-\sum\limits_{\stackrel{j=1}{j\neq l}}^n
\alpha(k)(\alpha(j)-\delta_{jk})x^{\alpha-\epsilon_j-\epsilon_k}g\otimes\xi(x_j\partial_l)v,\nonumber
\end{eqnarray}
and
\begin{eqnarray}\label{H(n)-module}
\rho_{\ggg}(D_H(x^{\alpha})) (g\otimes v)&=&(D_H(x^{\alpha}))(g)\otimes v+\sum\limits_{j=1}^{2r}\sigma(j)\alpha(j)(\alpha(j)-1)x^{\alpha-2\epsilon_j}g\otimes \xi (x_{j}\partial_{j^{\prime}})v\\
&&+\sum\limits_{1\leq j<k\leq r}\alpha(j)\alpha(k)x^{\alpha-\epsilon_j-\epsilon_k}g\otimes\xi(x_k\partial_{j^{\prime}}+x_j\partial_{k^{\prime}})v\nonumber\\
&&-\sum\limits_{k=1}^r\sum\limits_{j=r+1}^{2r}\alpha(j)\alpha(k)x^{\alpha-\epsilon_j-\epsilon_k}g\otimes\xi(x_k\partial_{j^{\prime}}-
x_j\partial_{k^{\prime}})v\nonumber\\
&&-\sum\limits_{r+1\leq j<k\leq 2r}\alpha(j)\alpha(k)x^{\alpha-\epsilon_j-\epsilon_k}g\otimes\xi(x_k\partial_{j^{\prime}}+x_j\partial_{k^{\prime}})v\nonumber
\end{eqnarray}
for $\ggg=S(n), H(n)$ respectively. Here $\alpha\in\mathbb{N}^n, 1\leq k<l\leq n, g\in P_n, v\in  L_0^{-}(\lambda)$, $\xi$ is the representation of the $\ggg_{[0]}$-module $L_0^{-}(\lambda)$.

\begin{rem} A conceptual account for the above $\ggg$-module structure can be provided by \cite[Theorem 1.2]{Shen1}.
\end{rem}

The following result asserts that the costandard
$\ggg$-module $\nabla(\lambda)$ is isomorphic to $\mathcal{V}(\lambda)$ for $\ggg=X(n)$, $X\in\{W,S,H\}$.

\begin{prop}\label{costandard iso}
Keep the notations as above, then $\nabla(\lambda)\cong \mathcal{V}(\lambda)$ as $U(\ggg)$-modules.
\end{prop}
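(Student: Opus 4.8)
The plan is to exhibit a canonical morphism $\Psi\colon\mathcal{V}(\lambda)\to\nabla(\lambda)$ in $\mathcal{O}_{\geq 0}$ and to prove it is an isomorphism by a degree-wise dimension count together with an injectivity argument. First I would record that $\mathcal{V}(\lambda)=P_n\otimes L_0^{-}(\lambda)$ lies in $\mathcal{O}_{\geq 0}$: the grading is inherited from $P_n$ (with $L_0^{-}(\lambda)$ placed in degree $0$), each graded piece $\mathcal{V}(\lambda)_{[i]}=P_n^{(i)}\otimes L_0^{-}(\lambda)$ is finite-dimensional, where $P_n^{(i)}$ is the space of homogeneous polynomials of degree $i$, and local finiteness for $B=\ggg_{[-1]}\oplus\ggg_{[0]}$ is immediate since $\ggg_{[0]}$ preserves the grading and $\ggg_{[-1]}$ lowers it. Next, let $\pi\colon\mathcal{V}(\lambda)\to L_0^{-}(\lambda)$ be the projection onto the constant part $1\otimes L_0^{-}(\lambda)\cong L_0^{-}(\lambda)$. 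A one-line inspection of (\ref{W(n)-module structure}), (\ref{S(n)-module}) and (\ref{H(n)-module}) shows that $\ggg_{[0]}$ acts on $1\otimes v$ exactly through $\xi$, so $\pi$ is $\ggg_{[0]}$-equivariant. Applying to $M=\mathcal{V}(\lambda)$ the adjunction $\Hom_{\mathcal{O}_{\geq 0}}(M,\nabla(\lambda))\cong\Hom_{U(\ggg_{[0]})}(M_{[0]},L_0^{-}(\lambda))$ recorded in the proof of the preceding lemma, with the element $\pi|_{\mathcal{V}(\lambda)_{[0]}}=\id$, I obtain a morphism $\Psi\colon\mathcal{V}(\lambda)\to\nabla(\lambda)$ characterized by $\mathrm{ev}\circ\Psi=\pi$, where $\mathrm{ev}\colon\nabla(\lambda)\to L_0^{-}(\lambda)$ is $f\mapsto f(1)$. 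In particular $\Psi$ is injective on $\mathcal{V}(\lambda)_{[0]}$, as $\mathrm{ev}$ provides a left inverse there.

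For the dimension count I would use the PBW decomposition $U(\ggg)\cong U(\ggg_0)\otimes U(\ggg_{[-1]})$ coming from $\ggg=\ggg_0\oplus\ggg_{[-1]}$. Since $\ggg_{[-1]}$ is abelian of dimension $n$, one has $U(\ggg_{[-1]})\cong\bbf[\partial_1,\dots,\partial_n]$, and a $U(\ggg_0)$-linear $f$ is determined freely by its values on the monomials $\partial^\beta$; the support condition defining $\nabla(\lambda)_{[i]}$ forces $f$ to vanish off $U(\ggg_{[-1]})_{[-i]}$. Hence $\nabla(\lambda)_{[i]}\cong\Hom_{\bbf}(U(\ggg_{[-1]})_{[-i]},L_0^{-}(\lambda))$, which has dimension $\binom{n+i-1}{i}\dim L_0^{-}(\lambda)=\dim\mathcal{V}(\lambda)_{[i]}$. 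As both modules have finite-dimensional graded pieces of equal dimension, $\Psi$ is an isomorphism as soon as it is injective.

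To prove injectivity I first compute the action of $\ggg_{[-1]}$ on $\mathcal{V}(\lambda)$. Each degree $-1$ generator equals $\partial_k=D_{kl}(x_l)$ for a suitable $l\neq k$ in the case $S(n)$, and $\partial_k=\sigma(k^{\prime})D_H(x_{k^{\prime}})$ in the case $H(n)$; substituting $|\alpha|=1$ into (\ref{W(n)-module structure}), (\ref{S(n)-module}) and (\ref{H(n)-module}) annihilates every correction term, so that uniformly $\rho_\ggg(\partial_k)(g\otimes v)=\partial_k(g)\otimes v$. Thus $U(\ggg_{[-1]})$ acts on $\mathcal{V}(\lambda)$ only through differential operators on the $P_n$-factor. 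Now take $0\neq m\in\mathcal{V}(\lambda)_{[i]}$, say $m=\sum_{|\alpha|=i}x^\alpha\otimes v_\alpha$ with $v_{\alpha_0}\neq 0$. Since the pairing $\lan\partial^\beta,x^\alpha\ran=\partial^\beta(x^\alpha)$ is perfect between $U(\ggg_{[-1]})_{[-i]}$ and $P_n^{(i)}$, we get $\rho_\ggg(\partial^{\alpha_0})m=\alpha_0!\,(1\otimes v_{\alpha_0})\neq 0$, a nonzero element of degree $0$. If $m$ lay in $\ker\Psi$, then, $\Psi$ being a $\ggg$-morphism, $\rho_\ggg(\partial^{\alpha_0})m$ would lie in $\ker\Psi\cap\mathcal{V}(\lambda)_{[0]}=0$, a contradiction; hence $\ker\Psi=0$ and $\Psi$ is the desired isomorphism.

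The main obstacle is the uniform identification of the $\ggg_{[-1]}$-action with plain differentiation on $P_n$: this is the one place where the explicit module-structure formulas for $S(n)$ and $H(n)$ must be unwound, and it is precisely what lets the perfect-pairing argument push an arbitrary nonzero homogeneous vector down into degree $0$. Once this computation is in place, both the $\ggg_{[0]}$-equivariance of $\pi$ and the reduction of bijectivity to injectivity are formal.
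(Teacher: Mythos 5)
Your proof is correct, and it reaches the conclusion by a genuinely different route from the paper's. The paper writes down the same map you construct --- namely $\psi(f\otimes v)(\partial_1^{a_1}\cdots\partial_n^{a_n})=\bigl(\partial_1^{a_1}\cdots\partial_n^{a_n}(f)|_{\underline{x}=\underline{0}}\bigr)v$, which is exactly your $\Psi(m)(u)=\pi(um)$ --- but it first checks bijectivity by hand (monomials hit a spanning set of $\nabla(\lambda)$) and then verifies $\ggg$-equivariance by a direct computation with multiplied binomial coefficients, carried out only for $W(n)$ with the cases $S(n)$ and $H(n)$ asserted to be similar. You instead obtain equivariance for free from the coinduction adjunction $\Hom_{\mathcal{O}_{\geq 0}}(M,\nabla(\lambda))\cong\Hom_{U(\ggg_{[0]})}(M_{[0]},L_0^{-}(\lambda))$, at the cost of two light, type-dependent verifications: that $\ggg_{[0]}$ acts on $1\otimes L_0^{-}(\lambda)$ through $\xi$ (so $\pi$ is equivariant in degree $0$), and that $\partial_k$ acts on $P_n\otimes L_0^{-}(\lambda)$ by plain differentiation (all correction terms in (\ref{S(n)-module}) and (\ref{H(n)-module}) carry a factor $\alpha(j)(\alpha(j)-1)$ or a product of two distinct $\alpha$-entries, hence vanish for $|\alpha|=1$). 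The latter feeds your perfect-pairing argument pushing any nonzero homogeneous kernel element into degree $0$, and the dimension count $\dim\nabla(\lambda)_{[i]}=\binom{n+i-1}{i}\dim L_0^{-}(\lambda)=\dim\mathcal{V}(\lambda)_{[i]}$ (which the paper also implicitly uses via $\nabla(\lambda)\cong\Hom_{\mathbb{F}}(U(\ggg_{[-1]}),\mathbb{F})\otimes L_0^{-}(\lambda)$) finishes the job. What your approach buys is uniformity across the three types and the elimination of the long combinatorial identity; what the paper's buys is complete explicitness of the intertwiner without invoking the universal property of $\nabla(\lambda)$. Both are sound.
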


\begin{proof}
We just need to prove the assertion for $\ggg=W(n)$. Similar arguments yield the statements for $\ggg=S(n)$ and $H(n)$. Below, we assume $\ggg=W(n)$.
As vector spaces,
\begin{eqnarray*}
\nabla(\lambda)&=&\Hom_{U(\ggg_0)}(U(\ggg), L_0^{-}(\lambda))\\
&\cong &\Hom_{U(\ggg_0)}(U(\ggg),\mathbb{F})\otimes_{\mathbb{F}} L_0^{-}(\lambda)\\
&\cong &\Hom_{\mathbb{F}}(U(\ggg_{[-1]}),\mathbb{F})\otimes_{\mathbb{F}} L_0^{-}(\lambda).
\end{eqnarray*}
Let
\begin{eqnarray*}
\psi:\,\mathcal{V}(\lambda)& \longrightarrow &\nabla(\lambda)\\
f\otimes v&\longmapsto&\psi(f\otimes v),\,\,f\in P_n, v\in L_0^{-}(\lambda),
\end{eqnarray*}
where $\psi(f\otimes v)\in \nabla(\lambda)$ is defined as
$$\psi(f\otimes v)(\partial_1^{a_1}\cdots\partial_n^{a_n})=\big(\partial_1^{a_1}\cdots\partial_n^{a_n}(f)|_{(x_1,\cdots, x_n)=(0,\cdots, 0)}\big)v.$$
Note that $\nabla(\lambda)$ is spanned by $\{f_{\underline{q}, v}\mid \underline{q}=(q_1,\cdots, q_n)\in\mathbb{N}^n, v\in L_0^{-}(\lambda)\}$, where
$$f_{\underline{q}, v}(\partial_1^{b_1}\cdots\partial_n^{b_n})=\delta_{\underline{b},\underline{q}}v,\,\underline{b}=(b_1,\cdots, b_n), \underline{q}=(q_1,\cdots, q_n)\in\mathbb{N}^n, v\in L_0^{-}(\lambda).$$
Then $\psi$ is surjective, since $f_{\underline{q}, v}$ is the image of $cx_1^{q_1}\cdots x_n^{q_n}\otimes v$ under the map $\psi$ for some nonzero constant $c$. Moreover, $\psi$ is injective. Indeed, if $\psi(\sum\limits_{i=1}^s f_i\otimes v_i)=0$, where $f_i\in P_n, v_i\in L_0^{-}(\lambda)$, and $\{v_i\}$ are linearly independent, then $\partial_1^{b_1}\cdots \partial_n^{b_n}(f)|_{(x_1,\cdots, x_n)=(0,\cdots, 0)}=0$ for any $(b_1,\cdots, b_n)\in\mathbb{N}^n$. This implies that $f_i=0$, $\forall\,i$, i.e., $\sum f_i\otimes v_i=0$.

In the following, we show that $\psi$ is a $\ggg$-module homomorphism. For that, we first remind the readers of  the multiplied combinatorial number ${\underline{a}\choose\underline{b}}=\prod_{i=1}^n{a_i\choose b_i}$   for $\underline{a}=(a_1,\cdots, a_n), \underline{b}=(b_1,\cdots, b_n)\in \mathbb{N}^n$, where ${s\choose t}:={s!\over t!(s-t)!}$ if $s\geq t$, $0$ if $s<t$, and ${s\choose 0}:=0$.
Taking any $\alpha,\beta, \underline{a}=(a_1,\cdots, a_n)\in \mathbb{N}^n$, $1\leq s \leq n$, $v\in L_0^{-}(\lambda)$, we have
\begin{eqnarray*}
\psi(x^{\alpha}\partial_s\cdot (x^{\beta}\otimes v))(\partial_1^{a_1}\cdots \partial_n^{a_n})&=&
\psi\big(x^{\alpha}(\partial_s(x^{\beta}))\otimes v+\sum\limits_{j=1}^nx^{\alpha-\epsilon_j}x^{\beta}\otimes E_{js}v\big)(\partial_1^{a_1}\cdots\partial_n^{a_n})\\
&=&\big(\partial_1^{a_1}\cdots\partial_n^{a_n}(x^{\alpha}x^{\beta-\epsilon_s})|_{\underline{x}=\underline{0}}\big) v+\sum\limits_{j=1}^n\big(\partial_1^{a_1}\cdots\partial_n^{a_n}(x^{\alpha-\epsilon_j}x^{\beta})|_{\underline{x}=\underline{0}}\big) E_{js}v\\
&=&\bigg(\sum\limits_{\underline{t}}{\underline{a}\choose\underline{t}}x^{\alpha-\underline{t}}x^{\beta-\epsilon_s-(\underline{a}-\underline{t})}
|_{\underline{x}=\underline{0}}\bigg)v\\
&&+\sum\limits_{j=1}^n\sum\limits_{\underline{u}}\bigg({\underline{a}\choose \underline{u}}x^{\alpha-\epsilon_j-\underline{u}}x^{\beta-(\underline{a}-\underline{u})}|_{\underline{x}=\underline{0}}\bigg) E_{js}v\\
&=&{\underline{a}\choose \alpha}\delta_{\alpha+\beta, \underline{a}+\epsilon_s}v+\sum\limits_{j=1}^n{\underline{a}\choose \alpha-\epsilon_j}\delta_{\alpha+\beta,\underline{a}+\epsilon_j}E_{js}v.
\end{eqnarray*}
On the other hand,
\begin{eqnarray*}
&&\big(x^{\alpha}\partial_s\cdot \psi(x^{\beta}\otimes v)\big)(\partial_1^{a_1}\cdots\partial_n^{a_n})\\
&= &\psi(x^{\beta}\otimes v)(\partial_1^{a_1}\cdots\partial_n^{a_n}(x^{\alpha}\partial_s))\\
&= &\psi(x^{\beta}\otimes v)\bigg(\sum\limits_{\underline{k}}{\underline{a}\choose \underline{k}}x^{\alpha+\underline{k}-\underline{a}}\partial_s\partial_1^{k_1}\cdots\partial_n^{k_n}\bigg)\\
&=&{\underline{a}\choose \underline{a}-\alpha}\delta_{\beta,\underline{a}-\alpha+\epsilon_s}v+\sum\limits_{j=1}^n{\underline{a}\choose \underline{a}-\alpha+\epsilon_j}\delta_{\beta,\underline{a}-\alpha+\epsilon_j}E_{js}v\\
&=&{\underline{a}\choose \alpha}\delta_{\alpha+\beta,\underline{a}+\epsilon_s}v+\sum\limits_{j=1}^n{\underline{a}\choose \alpha-\epsilon_j}\delta_{\alpha+\beta,\underline{a}+\epsilon_j}E_{js}v.
\end{eqnarray*}
Hence
$$\psi(x^{\alpha}\partial_s\cdot (x^{\beta}\otimes v))=x^{\alpha}\partial_s\cdot \psi(x^{\beta}\otimes v).$$
This implies that $\psi$ is a $\ggg$-module isomorphism, as desired. We complete the proof.
\end{proof}

\subsection{}  Recall the notations in \S\ref{basic notations} for unit linear functions. We have the following definition of exceptional weights for further use.
\begin{defn}
Let $\ggg=X(n), X\in\{W,S,H\}$, be a Lie algebra of vector fields. Set $\omega_0=0$ and
\begin{equation*}
\omega_k=\begin{cases}
\epsilon_{n+1-k}+\cdots+\epsilon_n,&\text{if}\,\,X\in\{W,S\},\cr
-\epsilon_1-\cdots-\epsilon_k,&\text{if}\,\,X=H,
\end{cases}
\end{equation*}
for $1\leq k\leq n^{\prime}$, where
\begin{equation*}
n^{\prime}=\begin{cases}
n,&\text{if}\,\, X=W,\cr
n-1,&\text{if}\,\,X=S,\cr
\frac{n}{2},&\text{if}\,\,X=H.
\end{cases}
\end{equation*}
These $\omega_k\,(0\leq k\leq n^{\prime})$ are called exceptional weights. The corresponding simple $\ggg$-modules $L(\omega_k)$ ($0\leq k\leq n^{\prime}$) are called exceptional $\ggg$-modules.
\end{defn}

The following result is due to A. Rudakov and G. Shen.

\begin{prop}(\cite[Theorem 13.7, and Corollaries 13.8-13.9]{Rud}, \cite[Theorem 4.8]{Rud2} and \cite[Theorem 2.4]{Shen})\label{known result1}
Let $\ggg=W(n)$ or $S(n)$. Then the following statements hold.
\begin{itemize}
\item[(1)] If $\lambda\in\Lambda^-$ is not exceptional, then $\mathcal{V}(\lambda)$ is a simple $\ggg$-module.
\item[(2)] The following sequence
$$0\longrightarrow\mathcal{V}(\omega_0)\xrightarrow{\,\,\,d_0\,\,\,}\mathcal{V}(\omega_1)\xrightarrow{\,\,\,d_1\,\,\,}\cdots\cdots \mathcal{V}(\omega_k)\xrightarrow{\,\,\,d_k\,\,\,}\mathcal{V}(\omega_{k+1})\xrightarrow{d_{k+1}}\cdots\cdots \mathcal{V}(\omega_{n-1})\xrightarrow{d_{n-1}} \mathcal{V}(\omega_{n})\longrightarrow 0$$
\end{itemize}
is exact, where
\begin{eqnarray*}
d_k:\,\mathcal{V}(\omega_k)& \longrightarrow &\mathcal{V}(\omega_{k+1})\\
x^{\alpha}\otimes (x_{j_1}\wedge\cdots\wedge x_{j_k})&\longmapsto&\sum\limits_{i=1}^n\partial_i(x^{\alpha})\otimes (x_{j_1}\wedge\cdots\wedge x_{j_k}\wedge x_i),\,\forall\,\alpha\in {\mathbb{N}}^n, 1\leq j_1<\cdots < j_k\leq n.
\end{eqnarray*}
\item[(3)] For $0\leq k\leq n-1$, $\mathcal{V}(\omega_k)$ contains two composition factors $L(\omega_k)$ and $L(\omega_{k+1})$ with free multiplicity. And $\mathcal{V}(\omega_n)\cong L(\omega_n)$.
\end{prop}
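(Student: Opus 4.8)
The plan is to recognize the family $\{\mathcal{V}(\omega_k)\}_{0\leq k\leq n}$ together with the maps $d_k$ as the polynomial (algebraic) de Rham complex, and then to extract (1)--(3) from the Poincar\'e lemma together with a generation analysis of the tensor-field modules. Writing $V=\bigoplus_i\mathbb{F}\,dx_i$ for the standard $\ggg_{[0]}$-module (so that $dx_i$ has weight $\epsilon_i$ and $\omega_k$ is the lowest weight of $\wedge^kV$), one identifies $\mathcal{V}(\omega_k)=P_n\otimes L_0^-(\omega_k)$ with the space $\Omega^k=P_n\otimes\wedge^kV$ of polynomial $k$-forms via $x^\alpha\otimes(x_{j_1}\wedge\cdots\wedge x_{j_k})\leftrightarrow x^\alpha\,dx_{j_1}\wedge\cdots\wedge dx_{j_k}$; under this identification $d_k$ is the exterior derivative (up to an overall sign). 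The first thing I would verify is that each $d_k$ is a $\ggg$-module map. For $\ggg=W(n)$ this is precisely the naturality of $d$ under Lie derivatives, $\mathcal{L}_X\circ d=d\circ\mathcal{L}_X$, a short check against the action formula (\ref{W(n)-module structure}); for $\ggg=S(n)$ one restricts the $W(n)$-action.

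For part (2) I would establish exactness via the classical contracting homotopy. Using the Euler vector field $E=\sum_i x_i\partial_i$ and contraction $\iota_E$, Cartan's formula gives $d\,\iota_E+\iota_E\,d=\mathcal{L}_E$, and $\mathcal{L}_E$ acts on the component of $\Omega^\bullet$ of polynomial degree $p$ and form degree $k$ as multiplication by $p+k$. Rescaling $\iota_E$ by $1/(p+k)$ on each such component (legitimate whenever $(p,k)\neq(0,0)$) produces an operator $h$ with $d\,h+h\,d=\mathrm{id}$ off the constants. This is the algebraic Poincar\'e lemma: it gives exactness at $\mathcal{V}(\omega_k)$ for all $k\geq 1$ together with surjectivity of $d_{n-1}$, the only homology being $\ker d_0=\mathbb{F}\cdot 1=L(\omega_0)$ in degree zero, exactly the extra bottom constituent seen in (3).

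Part (1) is the substantive point, and I would reduce it to a generation statement. By Proposition \ref{costandard iso}, $\mathcal{V}(\lambda)\cong\nabla(\lambda)$, whose socle is the simple module $L(\lambda)$ with $L(\lambda)_{[0]}=L_0^-(\lambda)=\mathcal{V}(\lambda)_{[0]}$. Since $L(\lambda)$ is simple it is generated by its degree-zero component, so the submodule of $\mathcal{V}(\lambda)$ generated by $\mathcal{V}(\lambda)_{[0]}$ is exactly the socle; hence $\mathcal{V}(\lambda)$ is irreducible if and only if its lowest-degree component $1\otimes L_0^-(\lambda)$ generates the whole module over $U(\ggg)$. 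Applying $\ggg_{[1]}$ to $1\otimes L_0^-(\lambda)$ via (\ref{W(n)-module structure}) yields a $\ggg_{[0]}$-equivariant map $\ggg_{[1]}\otimes L_0^-(\lambda)\to\mathcal{V}(\lambda)_{[1]}=(\bigoplus_b\mathbb{F}x_b)\otimes L_0^-(\lambda)$, and the whole question becomes whether the images of such first-order operators fill each graded piece. This is a purely $\mathfrak{gl}(n)$- (resp. $\mathfrak{sl}(n)$-) representation-theoretic condition, and the computation of Rudakov and Shen shows it succeeds for every $\lambda$ except the exceptional weights; for $\lambda$ non-exceptional the lowest component therefore generates and $\mathcal{V}(\lambda)=L(\lambda)$ is simple.

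Finally I would assemble (3). The same generation analysis, applied to an exceptional weight $\omega_k$ with $0\leq k\leq n-1$, shows that $1\otimes L_0^-(\omega_k)$ fails to generate exactly one further simple constituent, which the de Rham complex then identifies: the closed forms $\ker d_k=\im d_{k-1}$ form the socle $L(\omega_k)$, while the quotient $\mathcal{V}(\omega_k)/\ker d_k\cong\im d_k=\ker d_{k+1}$ is the socle $L(\omega_{k+1})$ of the next term, so $\mathcal{V}(\omega_k)$ has precisely the two constituents $L(\omega_k)$ and $L(\omega_{k+1})$, each once. For $k=n$ we have $d_n=0$, and surjectivity of $d_{n-1}$ gives $\mathcal{V}(\omega_n)\cong\im d_{n-1}\cong L(\omega_n)$, so $\mathcal{V}(\omega_n)$ is simple. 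I expect the main obstacle to be the $\mathfrak{gl}(n)$-equivariant generation computation underlying (1): one must control the image of the first-order operators on $L_0^-(\lambda)$ in every degree and verify that it can fail only at the weights $\omega_k$, which is the delicate case analysis carried out by Rudakov and Shen.
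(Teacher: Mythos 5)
The paper does not prove this proposition at all---it is quoted from Rudakov and Shen with citations only---so your de~Rham argument is not being measured against an argument in the text; it is, in substance, the standard mechanism underlying the cited results, and most of it is sound. The identification of $\mathcal{V}(\omega_k)$ with polynomial $k$-forms, the equivariance of $d_k$ (a direct check against (\ref{W(n)-module structure}), which is exactly the Lie-derivative action), and the Euler-field homotopy are all correct. Your reduction of (1) is also legitimate: by the paper's lemma on $\nabla(\lambda)$, the $B$-socle of $\nabla(\lambda)\cong\mathcal{V}(\lambda)$ is $L_0^{-}(\lambda)=\mathcal{V}(\lambda)_{[0]}$, whence $U(\ggg)\,\mathcal{V}(\lambda)_{[0]}=L(\lambda)=\Soc\mathcal{V}(\lambda)$ and simplicity is equivalent to generation by the degree-zero piece. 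Two caveats, though. First, your own homotopy computation shows $\ker d_0=\mathbb{F}\cdot 1\neq 0$, so the sequence in (2) is \emph{not} exact at $\mathcal{V}(\omega_0)$ as printed; you should state this as a correction to (2) rather than fold it silently into (3). Second, the hard content of (1), and the claim in (3) that exactly one further constituent appears, are entirely deferred to ``the computation of Rudakov and Shen''; that computation \emph{is} the theorem, so what you have is a correct framework and reduction, not a self-contained proof. Given that the paper itself only cites these sources, this is acceptable, but it should be said plainly.

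There is also a genuine error hiding in your uniform treatment of $S(n)$ ``by restriction.'' For $\ggg=S(n)$ the weight $\omega_n=\epsilon_1+\cdots+\epsilon_n$ vanishes on the Cartan subalgebra of $\sss\lll(n)$, so $L_0^{-}(\omega_n)$ is the trivial module and $\mathcal{V}(\omega_n)=\mathcal{V}(\omega_0)=P_n$; equivalently, divergence-free fields annihilate the volume form, so the top forms are isomorphic to $\Omega^0$ as $S(n)$-modules. Hence $\mathcal{V}(\omega_n)$ contains the constants as a proper nonzero submodule and is \emph{not} simple, $L(\omega_n)=L(\omega_0)=\mathbb{F}$, and the final assertion of (3) fails for $S(n)$. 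Worse, since $d_{n-1}$ is surjective (Poincar\'e lemma) and $\ker d_{n-1}\supseteq \im d_{n-2}\neq 0$, the module $\mathcal{V}(\omega_{n-1})$ surjects onto $P_n$ (length $\geq 2$) with nonzero kernel, so it has length at least three, contradicting (3) at $k=n-1$. Your inductive step ``$\mathcal{V}(\omega_k)/\ker d_k\cong\im d_k$ is the socle $L(\omega_{k+1})$ of the next term'' silently assumes that the chain terminates in a simple module; that is true for $W(n)$ and false for $S(n)$. Any honest proof must treat the top of the complex for $S(n)$ separately (and, in fact, the statement itself, transcribed from the $W(n)$ literature, needs to be amended there).
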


\begin{prop}(\cite[Theorem 5.10]{Rud2} and \cite[Theorem 2.5]{Shen})\label{known result2}
Let $\ggg=H(n)$, $n=2r$. Then the following statements hold.
\begin{itemize}
\item[(1)] If $\lambda\in\Lambda^-$ is not exceptional, then $\mathcal{V}(\lambda)$ is a simple $\ggg$-module.
\item[(2)] The composition factors of $\mathcal{V}(\omega_k)$ are $L(\omega_{k-1}), L(\omega_{k})$ and $L(\omega_{k+1})$ with
$[\mathcal{V}(\omega_k):L(\omega_{k-1})]=[\mathcal{V}(\omega_k):L(\omega_{k+1})]=1$ and $[\mathcal{V}(\omega_k):L(\omega_{k})]=2$, $0\leq k\leq r$, where we make convention that $L(\omega_{-1})=0$.
\end{itemize}
\end{prop}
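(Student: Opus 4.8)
The plan is to work through the realization $\mathcal V(\lambda)\cong\nabla(\lambda)$ furnished by Proposition \ref{costandard iso}, so that $\mathcal V(\lambda)$ has simple socle $L(\lambda)$ carried by its degree-zero component $L_0^-(\lambda)$; dually, the projective cover $\Delta(\lambda)$, whose underlying graded space is $U(\ggg_1)\otimes L_0^-(\lambda)$ by PBW, has simple head $L(\lambda)$. Determining the module structure then amounts to locating the singular vectors of $\Delta(\lambda)$, namely the homogeneous weight vectors $w\in\Delta(\lambda)_{[m]}$ with $m\ge 1$ that are annihilated by $\ggg_{[-1]}$ and generate an irreducible $\ggg_{[0]}$-submodule $\cong L_0^-(\mu)$; each such $w$ produces a composition factor $L(\mu)$, and every factor arises this way. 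Throughout I would use that for $\ggg_{[0]}\cong\mathfrak{sp}(2r)$ the module $L_0^-(\omega_k)$ is the $k$-th fundamental representation, namely the primitive (symplectic-traceless) summand of $\Lambda^k$ of the natural module, whose lowest weight is indeed $\omega_k=-(\epsilon_1+\cdots+\epsilon_k)$.

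The geometric engine is the Lefschetz decomposition of differential forms. Writing $\Omega^k=P_n\otimes\Lambda^k$ for the $\ggg$-module of polynomial $k$-forms, wedging and contracting with the symplectic form $\sum_{i=1}^r dx_i\wedge dx_{i+r}$ give operators $L$ and $\Lambda$ that commute with the $H(n)$-action (Hamiltonian fields preserve the form) and assemble into an $\mathfrak{sl}_2$-triple; the primitive part $\ker\Lambda\cap\Omega^k$ is exactly $\mathcal V(\omega_k)$ for $0\le k\le r$, and one obtains an $H(n)$-module decomposition $\Omega^k=\bigoplus_{j\ge 0}L^j\,\mathcal V(\omega_{k-2j})$. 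Since $d\omega=0$, the de Rham differential commutes with $L$, and the polynomial de Rham complex $(\Omega^\bullet,d)$ is exact off degree zero (the Poincar\'e lemma). Projecting $d$, and its symplectic adjoint codifferential $\delta$, onto primitive components produces genuine $\ggg$-intertwiners $\mathcal V(\omega_k)\to\mathcal V(\omega_{k+1})$ and $\mathcal V(\omega_k)\to\mathcal V(\omega_{k-1})$; these witness $L(\omega_{k+1})$ and $L(\omega_{k-1})$ as composition factors of $\mathcal V(\omega_k)$, each of multiplicity one.

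For the non-exceptional case (1) I would write out the singular-vector equations $\ggg_{[-1]}w=0$ in positive degree from the explicit action (\ref{H(n)-module}) and show they admit only the trivial solution unless $L_0^-(\lambda)$ is a fundamental (primitive) representation, i.e.\ unless $\lambda$ is one of the $\omega_k$; for $\lambda\notin\{\omega_0,\dots,\omega_r\}$ the forms/Lefschetz mechanism supplies no intertwiner and one concludes $\mathcal V(\lambda)=L(\lambda)$. The crux of (2), and the step I expect to be the main obstacle, is pinning the multiplicity $[\mathcal V(\omega_k):L(\omega_k)]=2$ and excluding any further factors. Here I would compute the formal character $\ch\mathcal V(\omega_k)=\ch P_n\cdot\ch L_0^-(\omega_k)$ and match it against $\ch L(\omega_{k-1})+2\,\ch L(\omega_k)+\ch L(\omega_{k+1})$, using the exactness of the Lefschetz-projected de Rham sequence to control the alternating sum of characters along the string through $\omega_k$. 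Unlike the $W(n)$ and $S(n)$ cases of Proposition \ref{known result1}, where a single linear complex resolves everything, the symplectic setting makes $L(\omega_k)$ surface from both ends of the Lefschetz string, so the delicate point is to run the bookkeeping with $d$ and $\delta$ simultaneously and rule out accidental cancellation, thereby forcing the multiplicity to be exactly two.
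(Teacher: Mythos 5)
The first thing to note is that the paper does not prove this proposition at all: it is imported from Rudakov \cite[Theorem 5.10]{Rud2} and Shen \cite[Theorem 2.5]{Shen}, so there is no in-text proof to compare against and any argument you give is necessarily your own. Your geometric picture (Lefschetz decomposition of $P_n\otimes \Lambda^\bullet$, the projected $d$ and $\delta$ as intertwiners producing the factors $L(\omega_{k\pm1})$, and the recognition that $[\mathcal{V}(\omega_k):L(\omega_k)]=2$ is the crux) is a reasonable heuristic and close in spirit to the literature. But the proposal has a structural gap at the very first step. You propose to read off the composition factors of $\mathcal{V}(\lambda)$ from singular vectors of $\Delta(\lambda)$, i.e.\ vectors of $U(\ggg_1)\otimes L_0^-(\lambda)$ killed by $\ggg_{[-1]}$. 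In this category there is no duality exchanging $\Delta(\lambda)$ and $\nabla(\lambda)\cong\mathcal{V}(\lambda)$: their characters already differ ($\Pi\cdot\ch L_0^-(\lambda)$ versus $\ch(P_n)\cdot\ch L_0^-(\lambda)$), hence so do their composition factors, and singular vectors of $\Delta(\lambda)$ compute the wrong module. If instead you look for $\ggg_{[-1]}$-invariants inside $\mathcal{V}(\lambda)=P_n\otimes L_0^-(\lambda)$ itself, formula (\ref{W(n)-module structure}) and its $H$-analogue (\ref{H(n)-module}) show that $\ggg_{[-1]}$ acts by honest partial derivatives on the $P_n$ factor, so the only invariants sit in degree $0$: the singular-vector condition detects the socle $L(\lambda)$ and nothing else, in particular none of the other three factors of $\mathcal{V}(\omega_k)$. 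So ``every factor arises this way'' is false in the form you use it; the workable version (Rudakov's) analyzes vectors annihilated by $\ggg_{[1]}$ and the submodule $U(\ggg_1)L_0^-(\lambda)$, and even then such vectors cannot see multiplicities greater than one, which is exactly what is at stake.

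The second gap is the multiplicity count itself. You plan to match $\ch(P_n)\ch L_0^-(\omega_k)$ against $\ch L(\omega_{k-1})+2\ch L(\omega_k)+\ch L(\omega_{k+1})$ ``using the exactness of the Lefschetz-projected de Rham sequence,'' but the characters $\ch L(\omega_j)$ are precisely the unknowns (knowing them is equivalent to the proposition), and the projected maps $\mathcal{V}(\omega_k)\to\mathcal{V}(\omega_{k\pm1})$ do not assemble into an exact complex in the symplectic case --- that failure is exactly why the answer is $(1,2,1)$ rather than the two-step pattern of Proposition \ref{known result1}. Nothing in the proposal pins the multiplicity at $2$, excludes further factors, or verifies that the projected intertwiners are nonzero; and part (1) is reduced to ``solve the singular-vector equations,'' which is the entire content of the cited theorems and is not carried out. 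A self-contained proof would have to perform the explicit $\ggg_{[1]}$-invariant computation in $P_n\otimes L_0^-(\lambda)$ (Rudakov) or the mixed-product analysis (Shen); as written, the proposal is an outline of where those computations would go rather than a proof.
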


\begin{rem} There is a modular version of propositions \ref{known result1}, \ref{known result2} (cf. \cite[Theorems 2.1, 2.2, 2.3]{Shen}).
\end{rem}

\section{Tilting modules and character formulas}
In the concluding section, we determine the character formulas for tilting modules in $\mathcal{O}$. Keep the notations as previously.
\subsection{Tilting modules}
\begin{defn}
An object $M\in\mathcal{O}$ is said to admit a $\Delta$-flag if there exists an increasing filtration
$$0=M_0\subset M_1\subset M_2\subset\cdots\cdots$$
such that $M=\bigcup\limits_{i=0}^{\infty}M_i$ and $M_{i+1}/M_{i}\cong \Delta(\lambda_i)$ for all $i\geq 1$, where $\lambda_i\in\Lambda^-,\,\forall\,i$.
\end{defn}

The following result follows from \cite[Theorem 5.2]{Soe}.

\begin{prop}(\cite[Theorem 5.2]{Soe})\label{tilting}
For each $\lambda\in\Lambda^-$, up to isomorphism, there exists a unique indecomposable object $T(\lambda)\in\mathcal{O}$ such that
\begin{itemize}
\item[(1)] $\Ext_{\mathcal{O}}^1(\Delta(\mu), T(\lambda))=0,\,\forall\,\mu\in\Lambda^-$.
\item[(2)] $T(\lambda)$ admits a $\Delta$-flag, starting with $\Delta(\lambda)$ at the bottom.
\end{itemize}
\end{prop}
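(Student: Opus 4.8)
The plan is to deduce this statement from the general tilting theory of Soergel, specifically \cite[Theorem 5.2]{Soe}, by verifying that our category $\mathcal{O}_{\geq 0}$ fits the axiomatic framework of a highest weight category (in the graded, possibly-infinite-rank sense) to which that theorem applies. The weight poset will be $\Lambda^-$ (with the partial order induced by the grading and the dominance order inside each $\ggg_{[0]}$-block), and the standard and costandard objects will be the $\Delta(\lambda)$ and $\nabla(\lambda)$ already constructed. The key structural inputs needed to invoke Soergel's theorem are exactly the properties assembled in the previous section: that each $\Delta(\lambda)$ is projective with simple head $L(\lambda)$ and each $\nabla(\lambda)$ is injective with simple socle $L(\lambda)$ (Lemma in \S 4.1, parts (1)--(2)), together with the orthogonality relations $\Hom_{\mathcal{O}_{\geq 0}}(\Delta(\lambda),\nabla(\mu))=\delta_{\lambda\mu}$ and $\Ext^1_{\mathcal{O}_{\geq 0}}(\Delta(\lambda),\nabla(\mu))=0$ (parts (3)--(4)). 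These are precisely the compatibility conditions between standard and costandard objects that Soergel's abstract setup requires.

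First I would record that the grading gives $\mathcal{O}_{\geq 0}$ the required local-finiteness and the interval-finiteness of the poset: for a fixed $\lambda$, only finitely many $\mu$ can appear in any finite piece of a $\Delta$-flag built on top of $\Delta(\lambda)$, because each $\Delta$-subquotient is generated in a definite degree and the degrees strictly increase as one moves up the filtration. This is what makes the (a priori infinite) $\Delta$-flag in (2) well-behaved and guarantees that $T(\lambda)$ lies in $\mathcal{O}$. Next I would construct $T(\lambda)$ by the standard iterated-extension procedure: start with $\Delta(\lambda)$, and whenever $\Ext^1_{\mathcal{O}_{\geq 0}}(\Delta(\mu),T)\neq 0$ for some $\mu$, form the universal extension adding a copy of $\Delta(\mu)$ on top, killing that obstruction. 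One shows this process terminates in each fixed degree, again using that the relevant $\mu$ have strictly larger depth, so the resulting module $T(\lambda)$ has a $\Delta$-flag starting with $\Delta(\lambda)$ and satisfies $\Ext^1_{\mathcal{O}_{\geq 0}}(\Delta(\mu),T(\lambda))=0$ for all $\mu$, which is (1).

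For uniqueness and indecomposability, the argument is the usual one: given two objects satisfying (1) and (2), the vanishing $\Ext^1(\Delta(\mu),T(\lambda))=0$ together with the orthogonality relations lets one lift the identity on the bottom $\Delta(\lambda)$ to mutually inverse maps between them, so any such object is determined up to isomorphism; indecomposability follows from $\dim\End_{\mathcal{O}_{\geq 0}}(T(\lambda))<\infty$ and the fact that $\Delta(\lambda)$ occurs exactly once at the bottom of the flag, forcing the endomorphism ring to be local. All of these are formal consequences once the input data $(\Delta,\nabla)$ with the four orthogonality/projectivity-injectivity properties are in hand, and this is exactly the content packaged in \cite[Theorem 5.2]{Soe}; the proper citation discharges the abstract homological bookkeeping.

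The main obstacle is not any single calculation but the verification that our $\mathcal{O}_{\geq 0}$ genuinely satisfies the hypotheses of Soergel's theorem --- in particular the finiteness conditions that make the infinite $\Delta$-flag converge and the termination of the universal-extension construction in each degree. The essential point that secures this is the \emph{depth/grading control}: because $\ggg$ is positively graded with $\ggg_{[-1]}$ lowering degree by one, any nonzero $\Ext^1(\Delta(\mu),-)$ contributing to $T(\lambda)$ forces $\mu$ to sit in a strictly deeper layer than $\lambda$, so only finitely many $\Delta(\mu)$ touch any given graded component. Once this is made precise, the remaining steps are the formal application of \cite[Theorem 5.2]{Soe}, and the proof is essentially the observation that the hypotheses of that theorem have all been established in the preceding lemmas.
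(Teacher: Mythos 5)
Your proposal matches the paper's approach: the paper gives no proof of this proposition beyond the citation of Soergel's Theorem 5.2, relying on the preceding sections (the semi-infinite character lemma and the projectivity/injectivity/orthogonality properties of $\Delta(\lambda)$ and $\nabla(\lambda)$ in $\mathcal{O}_{\geq 0}$) to place the category within Soergel's framework. Your additional sketch of the universal-extension construction, the degree-by-degree termination, and the uniqueness argument is just the standard internal content of Soergel's proof and is consistent with it.
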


The indecomposable module $T(\lambda)$ in Proposition \ref{tilting} is called the tilting module corresponding to $\lambda\in\Lambda^-$.
Now we are in position to further study the tilting modules by Soergel's theory.

Owing to Lemma \ref{semi-inf},  we have the following consequence from \cite[Theorem 5.12]{Soe} and Proposition \ref{costandard iso}.
\begin{prop}\label{soegrel formula}
Let $\ggg=X(n), X\in\{W,S,H\}$ with $n\geq 2$. Let $\lambda, \mu\in\Lambda^-$. Then we have
$$[T(\lambda):\Delta(\mu)]=[\mathcal{V}(-w_0\mu-\mathcal{E}_X):L(-w_0\lambda-\mathcal{E}_X)],$$
where $w_0$ is the longest element in the Weyl group of $\ggg_{[0]}$.
\end{prop}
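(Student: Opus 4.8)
The plan is to obtain this identity as a direct specialization of Soergel's abstract tilting-multiplicity theorem \cite[Theorem 5.12]{Soe} to our concrete category, the two algebra-specific inputs being the existence of the semi-infinite character $\mathcal{E}_X$ (Lemma \ref{semi-inf}) and the prolonging realization $\nabla(\nu)\cong\mathcal{V}(\nu)$ (Proposition \ref{costandard iso}). The work is therefore mostly one of verifying hypotheses and translating notation rather than new computation.

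First I would confirm that $\mathcal{O}_{\geq 0}$ satisfies the structural axioms of Soergel's framework of graded module categories. It is an abelian category with enough injectives; its simple objects $L(\lambda)$ are indexed by $\Lambda^-$ (Lemma \ref{lem1}); the standard objects $\Delta(\lambda)$ are the projective covers and the costandard objects $\nabla(\lambda)$ the injective hulls of $L(\lambda)$; and the orthogonality relations $\Hom_{\mathcal{O}_{\geq 0}}(\Delta(\lambda),\nabla(\mu))=\delta_{\lambda\mu}$ together with $\Ext^1_{\mathcal{O}_{\geq 0}}(\Delta(\lambda),\nabla(\mu))=0$ hold. All of this is established in Section 4, so $\mathcal{O}_{\geq 0}$ is a highest-weight category of exactly the type Soergel axiomatizes, and by the depth reduction via the translation functor $T[-d]$ it suffices to argue there. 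The one genuinely algebra-dependent hypothesis of \cite[Theorem 5.12]{Soe} is the presence of a semi-infinite character, conditions (SI-1)+(SI-2); this is precisely the content of Lemma \ref{semi-inf}, and it is here that the restriction $n\geq 2$ enters, since generation in degrees $-1,0,1$ already fails for the Witt algebra $W(1)$.

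Second I would invoke \cite[Theorem 5.12]{Soe}. In Soergel's setting $T(\lambda)$ is $\Delta$-filtered, and his theorem rewrites each multiplicity as a costandard decomposition number for a \emph{dual} weight, the duality being the composite of the contragredient involution $\nu\mapsto -w_0\nu$ on $\ggg_{[0]}$-weights with the twist by the semi-infinite character $\gamma=\mathcal{E}_X$. Specializing to our data gives
$$[T(\lambda):\Delta(\mu)]=[\nabla(-w_0\mu-\mathcal{E}_X):L(-w_0\lambda-\mathcal{E}_X)].$$
At this point one must check that $-w_0\nu-\mathcal{E}_X\in\Lambda^-$ whenever $\nu\in\Lambda^-$, so that both sides are defined: since $w_0$ carries the dominant cone onto the antidominant cone, $-w_0\nu$ is antidominant integral, and $\mathcal{E}_X$ is either $0$ (for $X=S,H$) or the Weyl-invariant weight $\epsilon_1+\cdots+\epsilon_n$ (for $X=W$), so antidominance is preserved under the shift. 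Finally I would substitute Proposition \ref{costandard iso}, namely $\nabla(-w_0\mu-\mathcal{E}_X)\cong\mathcal{V}(-w_0\mu-\mathcal{E}_X)$, to replace $\nabla$ by its prolonging model $\mathcal{V}$, which yields the asserted formula.

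The main obstacle I anticipate is not any single calculation but the faithful matching of conventions between the abstract setting of \cite{Soe} and ours. One must verify that Soergel's duality restricts on weights to precisely $\nu\mapsto -w_0\nu-\mathcal{E}_X$ rather than some variant sign or shift, and that his normalizations of standard and costandard objects coincide with the ones fixed in Sections 3 and 4. Once this dictionary is pinned down, the proposition follows as a formal specialization.
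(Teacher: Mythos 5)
Your proposal is correct and follows exactly the route the paper takes: the paper derives the formula as an immediate consequence of Soergel's \cite[Theorem 5.12]{Soe}, using Lemma \ref{semi-inf} to supply the semi-infinite character $\mathcal{E}_X$ and Proposition \ref{costandard iso} to replace $\nabla(-w_0\mu-\mathcal{E}_X)$ by $\mathcal{V}(-w_0\mu-\mathcal{E}_X)$. Your write-up is in fact more explicit than the paper's one-line justification about which hypotheses of Soergel's framework need checking.
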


\begin{rem} Recall  the notations in \S\ref{basic notations} for unit linear functions.
It should be noted that
\begin{equation*}
w_0\epsilon_i=\begin{cases}
\epsilon_{n+1-i},&\text{if}\,\, \ggg=W(n)\,\text{or}\,S(n),\cr
-\epsilon_i,&\text{if}\,\,\ggg=H(n),
\end{cases}
\end{equation*}
for any $1\leq i\leq n$.
\end{rem}

In the following, we always assume $n\geq 2$. We will precisely determine the multiplicity of the standard module $\Delta(\mu)$ occurring in the $\Delta$-filtration of the tilting module $T(\lambda)$ for any $\lambda, \mu\in\Lambda^-$.

\begin{prop}\label{main thm1}
Let $\ggg=W(n)$ with $n\geq 2$, $\lambda,\mu\in\Lambda^-$. The following statements hold.
\begin{itemize}
\item[(1)] If $\mu=-2\sum\limits_{i=1}^k\epsilon_i-\sum\limits_{j=k+1}^n\epsilon_j$ for some $k$ with $0\leq k\leq n-1$, then $[T(\lambda):\Delta(\mu)]\neq 0$ if and only if $\lambda=\mu$ or $\lambda=\mu-\epsilon_{k+1}$. Moreover, $[T(\mu):\Delta(\mu)]=[T(\mu-\epsilon_{k+1}):\Delta(\mu)]=1$.
\item[(2)] If $\mu\neq -2\sum\limits_{i=1}^k\epsilon_i-\sum\limits_{j=k+1}^n\epsilon_j$ for any $0\leq k\leq n-1$, then $[T(\lambda):\Delta(\mu)]\neq 0$ if and only if $\lambda=\mu$. Moreover,  $[T(\mu):\Delta(\mu)]=1$.
\end{itemize}
\end{prop}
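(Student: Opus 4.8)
The plan is to read off both statements directly from the Soergel reciprocity formula of Proposition~\ref{soegrel formula}, combined with the Rudakov--Shen description of the composition factors of $\mathcal{V}(\lambda)$ in Proposition~\ref{known result1}. For $\ggg=W(n)$ we have $w_0\epsilon_i=\epsilon_{n+1-i}$, and since $\mathcal{E}_W(x_i\partial_j)=\delta_{ij}$, the semi-infinite character is the weight $\mathcal{E}_W=\sum_{i=1}^n\epsilon_i$. Thus the formula reads
\[
[T(\lambda):\Delta(\mu)]=[\mathcal{V}(-w_0\mu-\mathcal{E}_W):L(-w_0\lambda-\mathcal{E}_W)],
\]
and everything reduces to a composition-multiplicity computation once the substitution $\nu\mapsto-w_0\nu-\mathcal{E}_W$ on weights is understood.

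First I would carry out this substitution explicitly. Writing $\nu=\sum_i\nu_i\epsilon_i$, the reversal $w_0$ moves the coefficient $\nu_i$ into the $(n+1-i)$-slot, so $-w_0\nu-\mathcal{E}_W=\sum_{i=1}^n(-\nu_{n+1-i}-1)\epsilon_i$. I would then determine precisely when this is an exceptional weight $\omega_k=\epsilon_{n+1-k}+\cdots+\epsilon_n$. Matching coefficients forces $\nu_i=-2$ for $i\leq k$ and $\nu_i=-1$ for $i>k$; that is, $-w_0\mu-\mathcal{E}_W=\omega_k$ holds exactly when $\mu=-2\sum_{i=1}^k\epsilon_i-\sum_{j=k+1}^n\epsilon_j$. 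This is the dichotomy separating the two cases: $\mu$ has the special shape of statement (1) if and only if $-w_0\mu-\mathcal{E}_W$ is exceptional.

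For statement (2), when $-w_0\mu-\mathcal{E}_W$ is non-exceptional, Proposition~\ref{known result1}(1) gives that $\mathcal{V}(-w_0\mu-\mathcal{E}_W)$ is simple, so the composition multiplicity is $\delta_{\lambda\mu}$, yielding $[T(\lambda):\Delta(\mu)]\neq0$ iff $\lambda=\mu$, with $[T(\mu):\Delta(\mu)]=1$. For statement (1), Proposition~\ref{known result1}(3) says that for $0\leq k\leq n-1$ the module $\mathcal{V}(\omega_k)$ has exactly the two composition factors $L(\omega_k)$ and $L(\omega_{k+1})$, each with multiplicity one. Hence $[T(\lambda):\Delta(\mu)]\neq0$ precisely when $-w_0\lambda-\mathcal{E}_W\in\{\omega_k,\omega_{k+1}\}$. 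Solving $-w_0\lambda-\mathcal{E}_W=\omega_k$ returns $\lambda=\mu$, while solving $-w_0\lambda-\mathcal{E}_W=\omega_{k+1}$ (the same matching with $k+1$ in place of $k$) returns $\lambda=-2\sum_{i=1}^{k+1}\epsilon_i-\sum_{j=k+2}^n\epsilon_j=\mu-\epsilon_{k+1}$, and in both cases the multiplicity is $1$.

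The individual computations are elementary, so the only real care needed lies in the index bookkeeping of the substitution $\nu\mapsto-w_0\nu-\mathcal{E}_W$: tracking the order-reversal induced by $w_0$ and verifying that the two solutions $\omega_k$ and $\omega_{k+1}$ translate back to the adjacent weights $\mu$ and $\mu-\epsilon_{k+1}$. This is the step where a sign or index slip would be easiest to make, so I would double-check it by recomputing $-w_0\lambda-\mathcal{E}_W$ from the claimed $\lambda=\mu-\epsilon_{k+1}$ and confirming it equals $\omega_{k+1}$.
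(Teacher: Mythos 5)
Your proposal is correct and follows essentially the same route as the paper's proof: apply Proposition \ref{soegrel formula} to convert $[T(\lambda):\Delta(\mu)]$ into a composition multiplicity in $\mathcal{V}(-w_0\mu-\mathcal{E}_W)$, then split into cases according to whether $-w_0\mu-\mathcal{E}_W$ is an exceptional weight and invoke Proposition \ref{known result1}. The only difference is that you carry out the substitution $\nu\mapsto -w_0\nu-\mathcal{E}_W$ more explicitly than the paper does, and your index bookkeeping (in particular that $-w_0\lambda-\mathcal{E}_W=\omega_{k+1}$ gives $\lambda=\mu-\epsilon_{k+1}$) checks out.
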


\begin{proof}
It follows from Proposition \ref{soegrel formula} that $[T(\lambda):\Delta(\mu)]=[\mathcal{V}(-w_0\mu-\mathcal{E}):L(-w_0\lambda-\mathcal{E})]$, where
$\mathcal{E}=\sum\limits_{i=1}^n\epsilon_i$.

Case 1: $\mathcal{V}(-w_0\mu-\mathcal{E})$ is not simple.

In this case, it follows from Proposition \ref{known result1} that $-w_0\mu-\mathcal{E}=\omega_k=\epsilon_{n-k+1}+\cdots+\epsilon_n$ for some $0\leq k\leq n-1$, i.e., $\mu=-2\sum\limits_{i=1}^k\epsilon_i-\sum\limits_{j=k+1}^n\epsilon_j$. Then by Proposition \ref{known result1}, $[\mathcal{V}(-w_0\mu-\mathcal{E}):L(-w_0\lambda-\mathcal{E})]\neq 0$ if and only if $-w_0\lambda-\mathcal{E}=\omega_k$ or $\omega_{k+1}$. This implies that $\lambda=\mu$ or $\lambda=\mu-\epsilon_{k+1}$, and $[T(\mu):\Delta(\mu)]=[T(\mu-\epsilon_{k+1}):\Delta(\mu)]=1$.

Case 2: $\mathcal{V}(-w_0\mu-\mathcal{E})$ is simple.

In this case, it follows from Proposition \ref{known result1} that $\mu\neq -2\sum\limits_{i=1}^k\epsilon_i-\sum\limits_{j=k+1}^n\epsilon_j$ for any $0\leq k\leq n-1$. The remaining assertions are obvious.
\end{proof}

Similar arguments as in the proof of Theorem \ref{main thm1} yield the following two results for $S(n)$ and $H(n)$.

\begin{prop}\label{main thm2}
Let $\ggg=S(n)$, $\lambda,\mu\in\Lambda^-$, then the following statements hold.
\begin{itemize}
\item[(1)] If $\mu=-\sum\limits_{i=1}^k\epsilon_i$ for some $k$ with $0\leq k\leq n-1$, then $[T(\lambda):\Delta(\mu)]\neq 0$ if and only if $\lambda=\mu$ or
$\lambda=\mu-\epsilon_{k+1}$. Moreover, $[T(\mu):\Delta(\mu)]=[T(\mu-\epsilon_{k+1}):\Delta(\mu)]=1$.
\item[(2)] If $\mu\neq -\sum\limits_{i=1}^k\epsilon_i$ for any $0\leq k\leq n-1$, then $[T(\lambda):\Delta(\mu)]\neq 0$ if and only if $\lambda=\mu$. Moreover,  $[T(\mu):\Delta(\mu)]=1$.
\end{itemize}
\end{prop}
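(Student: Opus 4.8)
The plan is to reduce everything to Soergel's multiplicity formula (Proposition~\ref{soegrel formula}) together with the known composition series of the prolongation modules $\mathcal{V}(\omega_k)$ recorded in Proposition~\ref{known result1}, exactly along the lines of the proof of Proposition~\ref{main thm1}. The one new ingredient is bookkeeping: here $\ggg_{[0]}\cong\sss\lll(n)$ and $\mathcal{E}_S=0$ by Lemma~\ref{semi-inf}, so Proposition~\ref{soegrel formula} collapses to
$$[T(\lambda):\Delta(\mu)]=[\mathcal{V}(-w_0\mu):L(-w_0\lambda)],$$
with $w_0\epsilon_i=\epsilon_{n+1-i}$ by the remark following Proposition~\ref{soegrel formula}. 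Thus the whole computation is governed by the action of the involution $-w_0$ on $\Lambda^-$.

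First I would translate the hypothesis on $\mu$ into a condition on $-w_0\mu$. A direct computation gives $-w_0\left(-\sum_{i=1}^k\epsilon_i\right)=\sum_{i=1}^k\epsilon_{n+1-i}=\epsilon_{n-k+1}+\cdots+\epsilon_n=\omega_k$, so the weights $\mu=-\sum_{i=1}^k\epsilon_i$ $(0\le k\le n-1)$ are precisely those for which $-w_0\mu$ is one of the exceptional weights $\omega_0,\dots,\omega_{n-1}$. By Proposition~\ref{known result1}(1) these are exactly the weights for which $\mathcal{V}(-w_0\mu)$ fails to be simple, which splits the argument into the two cases of the statement.

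In Case (1), where $-w_0\mu=\omega_k$, Proposition~\ref{known result1}(3) tells us that $\mathcal{V}(\omega_k)$ has exactly two composition factors, namely $L(\omega_k)$ and $L(\omega_{k+1})$, each occurring with multiplicity one. Hence $[\mathcal{V}(-w_0\mu):L(-w_0\lambda)]\ne0$ if and only if $-w_0\lambda\in\{\omega_k,\omega_{k+1}\}$. Applying the involution $-w_0$ to both sides, the first alternative reads $\lambda=-w_0\omega_k=\mu$; for the second I would use $-w_0\omega_{k+1}=-\sum_{i=1}^{k+1}\epsilon_i=\mu-\epsilon_{k+1}$, so the second alternative is $\lambda=\mu-\epsilon_{k+1}$, and in both cases the multiplicity is $1$. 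In Case (2), $\mathcal{V}(-w_0\mu)$ is simple and isomorphic to $L(-w_0\mu)$, so $[\mathcal{V}(-w_0\mu):L(-w_0\lambda)]=\delta_{\lambda\mu}$, giving $[T(\lambda):\Delta(\mu)]\ne0$ iff $\lambda=\mu$ with multiplicity $1$.

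The one point I expect to require care, rather than being routine, is the boundary index $k=n-1$ together with the fact that weights lie in $\hhh^*$ for $\sss\lll(n)$, where $\epsilon_1+\cdots+\epsilon_n=0$. There $\omega_n=\epsilon_1+\cdots+\epsilon_n=0=\omega_0$, so the second composition factor $L(\omega_{k+1})=L(\omega_n)$ of $\mathcal{V}(\omega_{n-1})$ is the trivial module $L(0)$, and correspondingly the candidate $\lambda=\mu-\epsilon_n=-\sum_{i=1}^n\epsilon_i$ collapses to $0$ in $\hhh^*$. I would check that this identification is consistent with Proposition~\ref{known result1} (so that the count $[\mathcal{V}(\omega_{n-1}):L(\omega_n)]=1$ persists) and that each candidate $\lambda=\mu-\epsilon_{k+1}$ indeed remains anti-dominant, hence lies in $\Lambda^-$; everything else is the same formal manipulation as in Proposition~\ref{main thm1}.
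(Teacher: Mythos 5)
Your argument is correct and is precisely the paper's: the paper gives no separate proof for $S(n)$, stating only that the result follows by the same reduction as Proposition \ref{main thm1}, namely Proposition \ref{soegrel formula} with $\mathcal{E}_S=0$ combined with Proposition \ref{known result1} and the action of $-w_0$ on weights. Your extra attention to the boundary case $k=n-1$ --- where $\omega_n=\epsilon_1+\cdots+\epsilon_n$ vanishes in $\hhh^*$ for $\sss\lll(n)$, so that $L(\omega_n)=L(\omega_0)$ and the candidate $\lambda=\mu-\epsilon_n$ collapses to $0$ --- is more care than the paper takes, and it is exactly the point at which the multiplicity count rests on trusting the $S(n)$ form of Proposition \ref{known result1}.
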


\begin{prop}\label{main thm3}
Let $\ggg=H(n)$, $n=2r$. Let $\lambda,\mu\in\Lambda^-$, then the following statements hold.
\begin{itemize}
\item[(1)] If $\mu=\omega_k$ for $0\leq k\leq r$, then $[T(\lambda):\Delta(\mu)]\neq 0$ if and only if $\lambda=\mu, \mu+\epsilon_k$ or
$\mu-\epsilon_{k+1}$. Moreover, $[T(\mu+\epsilon_k):\Delta(\mu)]=[T(\mu-\epsilon_{k+1}):\Delta(\mu)]=1$, and $[T(\mu):\Delta(\mu)]=2$.
\item[(2)] If $\mu\neq \omega_k$ for any $0\leq k\leq r$, then $[T(\lambda):\Delta(\mu)]\neq 0$ if and only if $\lambda=\mu$. Moreover,  $[T(\mu):\Delta(\mu)]=1$.
\end{itemize}
\end{prop}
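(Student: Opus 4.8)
The plan is to translate the multiplicity statement into a decomposition-number computation via Proposition \ref{soegrel formula}, exactly as was done for $W(n)$ in Proposition \ref{main thm1}. Since $\mathcal{E}_H=0$, the formula reads $[T(\lambda):\Delta(\mu)]=[\mathcal{V}(-w_0\mu):L(-w_0\lambda)]$, and for $\ggg=H(n)$ the earlier remark gives $w_0\epsilon_i=-\epsilon_i$, so $-w_0\epsilon_i=\epsilon_i$ and hence $-w_0\mu=\mu$ for every weight; the reflection $-w_0$ acts trivially here. This is a genuine simplification compared with the $W(n)$ and $S(n)$ cases, and it means the computation reduces directly to reading off composition multiplicities of $\mathcal{V}(\mu)$.

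Next I would invoke Proposition \ref{known result2}, which records the composition factors of the prolonging modules $\mathcal{V}(\omega_k)$ for $H(n)$. The key difference from the $W/S$ situation is the three-step (rather than two-step) filtration: $[\mathcal{V}(\omega_k):L(\omega_{k-1})]=[\mathcal{V}(\omega_k):L(\omega_{k+1})]=1$ and $[\mathcal{V}(\omega_k):L(\omega_k)]=2$, with the convention $L(\omega_{-1})=0$. I would split into two cases according to whether $\mu$ is exceptional. If $\mu=\omega_k$, then $\mathcal{V}(\mu)=\mathcal{V}(\omega_k)$ is non-simple, and $[\mathcal{V}(\omega_k):L(\omega_j)]\neq 0$ precisely for $j\in\{k-1,k,k+1\}$; translating $-w_0\lambda=\omega_j$ back through $-w_0=\mathrm{id}$ gives $\lambda=\omega_{k-1},\omega_k,\omega_{k+1}$. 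I would then check that $\omega_{k-1}=\omega_k+\epsilon_k$ and $\omega_{k+1}=\omega_k-\epsilon_{k+1}$ under the definition $\omega_k=-\epsilon_1-\cdots-\epsilon_k$, which matches the claimed weights $\mu+\epsilon_k$ and $\mu-\epsilon_{k+1}$, and the multiplicity $2$ at $\lambda=\mu$ comes directly from $[\mathcal{V}(\omega_k):L(\omega_k)]=2$. If $\mu$ is not exceptional, Proposition \ref{known result2}(1) says $\mathcal{V}(\mu)$ is simple, so the only composition factor is $L(\mu)$, forcing $\lambda=\mu$ with multiplicity one.

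The routine but necessary bookkeeping step is verifying the weight identities $\omega_{k-1}=\omega_k+\epsilon_k$ and $\omega_{k+1}=\omega_k-\epsilon_{k+1}$, together with the boundary behavior at $k=0$ (where $\omega_{-1}=0$ and the factor $L(\omega_{-1})$ vanishes) and at $k=r$ (where $\mathcal{V}(\omega_r)$ sits at the top of the complex). The main conceptual point to get right is the interaction of the trivial $-w_0$ action with the indexing of exceptional weights: because $-w_0$ is the identity on $\hhh^*$ for $H(n)$, no nontrivial relabeling of the $\omega_k$ occurs, in contrast to the $W(n)$ case where $-w_0$ reverses the $\epsilon_i$. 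I expect no serious obstacle; the only care needed is to ensure the convention $L(\omega_{-1})=0$ is applied consistently so that the case $\mu=\omega_0$ yields exactly the two tilting contributions $\lambda=\omega_0$ (multiplicity $2$) and $\lambda=\omega_0-\epsilon_1=\omega_1$ (multiplicity $1$), with no spurious $\mu+\epsilon_0$ term.
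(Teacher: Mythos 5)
Your proposal is correct and is exactly the argument the paper intends: the paper omits the proof of Proposition \ref{main thm3}, stating only that it follows by the same argument as Proposition \ref{main thm1}, and your adaptation (using $\mathcal{E}_H=0$ and $-w_0=\mathrm{id}$ to reduce to $[T(\lambda):\Delta(\mu)]=[\mathcal{V}(\mu):L(\lambda)]$, then reading off multiplicities from Proposition \ref{known result2} and checking $\omega_{k-1}=\omega_k+\epsilon_k$, $\omega_{k+1}=\omega_k-\epsilon_{k+1}$) is precisely that adaptation. Your attention to the $k=0$ boundary via the convention $L(\omega_{-1})=0$ is also consistent with the paper.
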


\subsection{Character formulas} Now we introduce the formal characters of modules from $\co$. For this, let $\Delta_1$ be the root system of $\ggg_1$ relative to $\hhh$, i.e., $\ggg_1=\sum\limits_{\alpha\in\Delta_1}\ggg_{\alpha}$ with
$\ggg_{\alpha}=\{x\in\ggg\mid [h,x]=\alpha(h)x,\,\,\forall\,h\in\hhh\}$. We next introduce a subset $D(\lambda)\subset \hhh^*$ associated with $\lambda\in \Lambda^-$: $D(\lambda)=\{\mu\in \hhh^*\mid \mu\succeq \lambda\}$, where $\mu\succeq \lambda$ means that $\mu-\lambda$ lies in the $\bbz_{\geq 0}$-span of $\Delta_1\cup \Phi^-$. Then we define an $\bbf$-algebra $\mathcal{A}$, whose elements are series of the form $\sum_{\lambda\in \hhh^*}c_\lambda e^\lambda$ with $c_\lambda\in \bbf$ and $c_\lambda=0$ for $\lambda$ outside the union of a finite number of sets of the form $D(\mu)$. Then $\mathcal{A}$ naturally becomes a commutative associative algebra if we define $e^\lambda e^\mu=e^{\lambda+\mu}$, and identify $e^0$ with the identity element. All formal exponentials $\{e^\lambda\}$ are linearly independent, and then in one-to-one correspondence with $\hhh^*$. For a semisimple $\hhh$-module $W=\sum_{\lambda\in\hhh^*}W_\lambda$, if  the weight spaces are all finite-dimensional, then we can define $\ch(W)=\sum_{\lambda\in\hhh^*}\dim W_\lambda e^\lambda$. In particular, if $V$ is an object in $\mathcal{O}$,
then $\ch(V)\in \mathcal{A}$. We have the following obvious fact.

\begin{lem}\label{cha fact}  The following statements hold.
\begin{itemize}
\item[(1)] Let $V_1, V_2$ and $V_3$ be three $\ggg$-modules in the category $\co$. If there is an exact sequence of $\ggg$-modules $0\rightarrow V_1\rightarrow V_2\rightarrow V_3\rightarrow 0 $, then $\ch(V_2)=\ch(V_1)+\ch(V_3)$.
\item[(2)] Suppose $W=\sum_{\lambda\in\hhh^*}W_\lambda$ is a semi-simple $\hhh$-module with finite-dimensional weight spaces, and $U=\sum_{\lambda\in\hhh^*}U_\lambda$ is a finite-dimensional $\hhh$-module. If $\ch(W)=\sum_{\lambda\in\hhh^*}c_{\lambda}e^\lambda$ falls in $\mathcal{A}$, then $\ch(W\otimes_\bbf U)$ must fall in $\mathcal{A}$ and $\ch(W\otimes_\bbf U)=\ch(W)\ch(U)$.
\end{itemize}
\end{lem}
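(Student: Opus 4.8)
The final statement is Lemma \ref{cha fact}, which records two elementary facts about the formal character map on $\co$.

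\medskip

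The plan is to verify each of the two assertions directly from the definition of $\ch$ and of the algebra $\mathcal{A}$, with the only genuine content being the bookkeeping needed to ensure the relevant series actually lie in $\mathcal{A}$. For part (1), I would argue weight space by weight space. Since the morphisms in $\co$ respect the $\hhh$-action, the exact sequence $0\to V_1\to V_2\to V_3\to 0$ of $\ggg$-modules restricts, for each fixed $\lambda\in\hhh^*$, to an exact sequence of weight spaces $0\to (V_1)_\lambda\to (V_2)_\lambda\to (V_3)_\lambda\to 0$. By exactness and finite-dimensionality of each weight space (guaranteed because all three modules lie in $\co$), we get $\dim (V_2)_\lambda=\dim (V_1)_\lambda+\dim (V_3)_\lambda$. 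Summing the identity $\dim (V_2)_\lambda\, e^\lambda=\dim (V_1)_\lambda\, e^\lambda+\dim (V_3)_\lambda\, e^\lambda$ over all $\lambda$ gives $\ch(V_2)=\ch(V_1)+\ch(V_3)$, and the sum makes sense in $\mathcal{A}$ because each summand already lies in $\mathcal{A}$ and $\mathcal{A}$ is closed under addition.

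\medskip

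For part (2), I would first note that $W\otimes_\bbf U$ is again a semisimple $\hhh$-module, whose $\nu$-weight space decomposes as $(W\otimes U)_\nu=\bigoplus_{\lambda+\mu=\nu}W_\lambda\otimes U_\mu$; this is the standard fact that the weights of a tensor product are the pairwise sums of weights. Hence $\dim (W\otimes U)_\nu=\sum_{\lambda+\mu=\nu}(\dim W_\lambda)(\dim U_\mu)$, which is precisely the $e^\nu$-coefficient in the product $\ch(W)\ch(U)$ formed under the rule $e^\lambda e^\mu=e^{\lambda+\mu}$. So formally $\ch(W\otimes U)=\ch(W)\ch(U)$; it remains to check this product is well defined and lies in $\mathcal{A}$. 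Writing $\ch(U)=\sum_{\mu} d_\mu e^\mu$ with only finitely many nonzero $d_\mu$ (as $U$ is finite-dimensional), the product is the finite sum $\sum_\mu d_\mu\,\big(e^\mu\,\ch(W)\big)$ of translates of $\ch(W)$. Since $\ch(W)\in\mathcal{A}$ has support inside a finite union of cones $D(\nu_1)\cup\cdots\cup D(\nu_m)$, and $e^\mu D(\nu)=D(\nu+\mu)$ is again a cone of the required shape, the translate $e^\mu\ch(W)$ lies in $\mathcal{A}$; a finite sum of elements of $\mathcal{A}$ again lies in $\mathcal{A}$. Moreover, for each fixed $\nu$ only finitely many $(\lambda,\mu)$ with $\lambda+\mu=\nu$ contribute (the $\mu$ range over the finite support of $U$), so the coefficient is finite and no convergence issue arises.

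\medskip

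The only point requiring real care—and the place I expect to spend the most attention—is confirming that the formal product $\ch(W)\ch(U)$ genuinely defines an element of $\mathcal{A}$, i.e.\ that the support-finiteness condition defining $\mathcal{A}$ is preserved under multiplication by a finitely supported series. This reduces to the closure property $e^\mu D(\nu)\subseteq D(\nu+\mu)$ for the cones $D(\cdot)$, together with the finiteness of the support of $\ch(U)$; once these are in hand, everything else is the routine coefficient comparison sketched above. Both parts are ``obvious'' in the sense stated, and the proof is essentially an exercise in unwinding definitions, so I would keep the write-up short and emphasize only the membership-in-$\mathcal{A}$ verification.
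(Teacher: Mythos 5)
Your proof is correct. The paper itself offers no argument here---the lemma is introduced with ``We have the following obvious fact'' and no proof environment follows---so your write-up simply supplies the routine verification the authors omitted: the weight-space-by-weight-space dimension count for (1), and for (2) the decomposition $(W\otimes U)_\nu=\bigoplus_{\lambda+\mu=\nu}W_\lambda\otimes U_\mu$ together with the check that the support of the product stays in a finite union of sets $D(\nu_i+\mu)$, which is exactly the one point that needed care. The only implicit input you rely on, finite-dimensionality of the weight spaces of objects of $\co$, is likewise asserted without proof by the paper when it claims $\ch(V)\in\mathcal{A}$ for $V\in\co$, so your use of it is consistent with the paper's conventions.
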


Let us investigate the formal character of a standard module $\Delta(\lambda)$ for $\lambda\in \Lambda^-$.
Recall $\Delta(\lambda)=U(\ggg_1)\otimes_{\bbf}L_0^{-}(\lambda)$. As a $U(\ggg_1)$-module, $\Delta(\lambda)$ is a free module of rank $\dim L_0^-(\lambda)$ generated by $L_0^-(\lambda)$. By Lemma \ref{cha fact}(2), we have $\ch(\Delta(\lambda))=\ch(U(\ggg_1))\ch L_0^-(\lambda)$ for $\lambda\in \Lambda^-$.  Set
$$\Pi=\prod\limits_{\alpha\in\Delta_1}
(1-e^{\alpha})^{-1},$$
then we further have $\ch(\Delta(\lambda))=\Pi\ch L_0^-(\lambda)$.
As a direct consequence of Propositions \ref{main thm1}, \ref{main thm2} and \ref{main thm3} along with Lemma \ref{cha fact}, we have the following consequences on character formulas for tilting modules.

\begin{thm}
Let $\ggg=W(n)$ and $\lambda\in\Lambda^-$ ($n\geq 2$). The following statements hold.
\begin{itemize}
\item[(1)] If $\lambda=-2\sum\limits_{i=1}^k\epsilon_i-\sum\limits_{j=k+1}^n\epsilon_j$ for some $k$ with $1\leq k\leq n$, then
$$\ch(T(\lambda))=\Pi(\ch(L_0^{-}(\lambda))+\ch(L_0^{-}(\lambda+\epsilon_k))).$$
\item[(2)] If $\lambda\neq -2\sum\limits_{i=1}^k\epsilon_i-\sum\limits_{j=k+1}^n\epsilon_j$ for any $k$ with $1\leq k\leq n$, then
$$\ch(T(\lambda))=\Pi(\ch(L_0^{-}(\lambda))).$$
\end{itemize}
\end{thm}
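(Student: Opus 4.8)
The plan is to compute $\ch(T(\lambda))$ directly from its $\Delta$-flag. By Proposition \ref{tilting}(2) the tilting module $T(\lambda)$ carries a $\Delta$-flag whose successive quotients are standard modules, so applying Lemma \ref{cha fact}(1) across the flag yields
$$\ch(T(\lambda))=\sum_{\mu\in\Lambda^-}[T(\lambda):\Delta(\mu)]\,\ch(\Delta(\mu)).$$
Each multiplicity is finite and, by Proposition \ref{main thm1}, only finitely many $\mu$ contribute, so the sum is finite. Inserting the formula $\ch(\Delta(\mu))=\Pi\,\ch(L_0^-(\mu))$ established just above, the theorem reduces to determining, for a fixed $\lambda$, exactly which $\Delta(\mu)$ occur in $T(\lambda)$ and with what multiplicity.

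The crux is that Proposition \ref{main thm1} must be read backwards: as stated it fixes $\mu$ and lists the $\lambda$ with $[T(\lambda):\Delta(\mu)]\neq0$, whereas I need the admissible $\mu$ for fixed $\lambda$. Abbreviate $\nu_k:=-2\sum_{i=1}^k\epsilon_i-\sum_{j=k+1}^n\epsilon_j$ for $0\le k\le n$, so that $\nu_0=-\sum_{j=1}^n\epsilon_j$ and $\nu_n=-2\sum_{i=1}^n\epsilon_i$, and record the reindexing identity
$$\nu_k+\epsilon_k=\nu_{k-1}\qquad(1\le k\le n),$$
which holds because adding one to the $k$-th coordinate turns a leading block of $k$ twos into one of length $k-1$. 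With this notation Proposition \ref{main thm1}(1) states exactly that, for $0\le k\le n-1$, the standard module $\Delta(\nu_k)$ appears with multiplicity $1$ in precisely the two tilting modules $T(\nu_k)$ and $T(\nu_{k+1})$, while Proposition \ref{main thm1}(2) says that every remaining $\Delta(\mu)$ appears (with multiplicity $1$) only in $T(\mu)$.

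Fixing $\lambda$ and reading off which $\Delta(\mu)$ survive then splits into the two advertised cases. If $\lambda=\nu_k$ for some $1\le k\le n$ (part (1)), then $\Delta(\lambda)$ occurs once, and in addition $\Delta(\nu_{k-1})$ occurs once because $\lambda=\nu_k=\nu_{(k-1)+1}$ makes $T(\lambda)$ one of the two tilting modules containing $\Delta(\nu_{k-1})$; no other $\mu$ qualifies. Using $\nu_{k-1}=\lambda+\epsilon_k$ gives the stated $\ch(T(\lambda))=\Pi(\ch(L_0^-(\lambda))+\ch(L_0^-(\lambda+\epsilon_k)))$. If $\lambda$ is not of the form $\nu_k$ with $1\le k\le n$ (part (2)), the same reading leaves only $\Delta(\lambda)$, with multiplicity $1$, giving $\ch(T(\lambda))=\Pi\,\ch(L_0^-(\lambda))$.

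The only delicate point, and the main obstacle, is the boundary bookkeeping of the index $k$ when inverting Proposition \ref{main thm1}. One must verify that the range $1\le k\le n$ in the statement is exactly the set of $\lambda$ admitting a second summand: concretely, $\lambda=\nu_0=(-1,\dots,-1)$ must fall into part (2) (it is never of the form $\nu_{k+1}$, so no extra $\Delta$ appears), whereas $\lambda=\nu_n=(-2,\dots,-2)$ must fall into part (1) (it equals $\nu_{(n-1)+1}$, forcing the extra summand $\Delta(\nu_{n-1})=\Delta(\lambda+\epsilon_n)$). Once these endpoints are checked, both character formulas follow by direct substitution.
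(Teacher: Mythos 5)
Your proposal is correct and follows exactly the route the paper intends: the paper offers no written proof beyond declaring the theorem ``a direct consequence'' of Proposition \ref{main thm1} and Lemma \ref{cha fact} together with $\ch(\Delta(\mu))=\Pi\,\ch L_0^-(\mu)$, and your argument supplies precisely that derivation, including the correct inversion of Proposition \ref{main thm1} via the identity $\nu_k+\epsilon_k=\nu_{k-1}$ and the endpoint checks at $\nu_0$ and $\nu_n$.
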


\begin{thm}
Let $\ggg=S(n)$ and $\lambda\in\Lambda^-$, then the following statements hold.
\begin{itemize}
\item[(1)] If $\lambda=-\sum\limits_{i=1}^k\epsilon_i$ for some $k$ with $1\leq k\leq n-1$, then
$$\ch(T(\lambda))=\Pi(\ch(L_0^{-}(\lambda))+\ch(L_0^{-}(\lambda+\epsilon_k))).$$
\item[(2)] If $\lambda\neq -\sum\limits_{i=1}^k\epsilon_i$ for any $0\leq k\leq n-1$, then
$$\ch(T(\lambda))=\Pi(\ch(L_0^{-}(\lambda))).$$
\end{itemize}
\end{thm}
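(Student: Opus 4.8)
The plan is to reduce the character computation entirely to the $\Delta$-flag multiplicities already supplied by Proposition \ref{main thm2}, and then translate them into characters via the standard-module formula $\ch(\Delta(\mu))=\Pi\,\ch(L_0^{-}(\mu))$ together with the additivity of $\ch$ along short exact sequences (Lemma \ref{cha fact}(1)). Concretely, I would apply the $\Delta$-flag of $T(\lambda)$ from Proposition \ref{tilting}(2) and induct on its length to obtain
$$\ch(T(\lambda))=\sum_{\mu\in\Lambda^-}[T(\lambda):\Delta(\mu)]\,\ch(\Delta(\mu))=\Pi\sum_{\mu\in\Lambda^-}[T(\lambda):\Delta(\mu)]\,\ch(L_0^{-}(\mu)).$$
Since Proposition \ref{main thm2} shows that for fixed $\lambda$ only finitely many $\mu$ give a nonzero multiplicity, this flag is finite and the sum is a genuine finite sum lying in $\mathcal{A}$. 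Thus the whole task is to read off which $\Delta(\mu)$ occur in $T(\lambda)$ and with what multiplicity.

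The main step is to invert Proposition \ref{main thm2}, which is phrased with $\mu$ held fixed, into the form I need with $\lambda$ held fixed. In case (1), where $\lambda=-\sum_{i=1}^k\epsilon_i$ with $1\le k\le n-1$, I would first argue that any contributing $\mu$ must itself be exceptional: a non-exceptional $\mu$ would force $\lambda=\mu$ by Proposition \ref{main thm2}(2), impossible since $\lambda$ is exceptional. Writing the exceptional $\mu$ as $-\sum_{i=1}^{k'}\epsilon_i$, Proposition \ref{main thm2}(1) then permits exactly $\lambda=\mu$ (forcing $k'=k$, i.e. $\mu=\lambda$) or $\lambda=\mu-\epsilon_{k'+1}$ (forcing $k'=k-1$, i.e. $\mu=\lambda+\epsilon_k$), each with multiplicity $1$. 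Substituting these two terms into the displayed formula would yield $\ch(T(\lambda))=\Pi\big(\ch(L_0^{-}(\lambda))+\ch(L_0^{-}(\lambda+\epsilon_k))\big)$, as claimed.

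In case (2), where $\lambda$ is not exceptional, I would rule out every exceptional $\mu$: by Proposition \ref{main thm2}(1) such a $\mu=-\sum_{i=1}^{k'}\epsilon_i$ could contribute only if $\lambda=\mu$ or $\lambda=\mu-\epsilon_{k'+1}=-\sum_{i=1}^{k'+1}\epsilon_i$, both of which would make $\lambda$ exceptional. By Proposition \ref{main thm2}(2) the sole remaining contributor is the non-exceptional weight $\mu=\lambda$, with multiplicity $1$, giving $\ch(T(\lambda))=\Pi\,\ch(L_0^{-}(\lambda))$.

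I expect the only real content to be the inversion bookkeeping of the two indexing conventions, namely keeping straight that $\mu=\lambda+\epsilon_k$ corresponds to the neighboring exceptional weight $-\sum_{i=1}^{k-1}\epsilon_i$ and checking the boundary values of $k$. There is no analytic obstacle: finiteness of the $\Delta$-flag keeps every sum finite and inside $\mathcal{A}$, so no convergence issue arises. The same argument, with Proposition \ref{main thm2} replaced by Propositions \ref{main thm1} and \ref{main thm3}, would furnish the corresponding character formulas for $W(n)$ and $H(n)$, the only differences being the shapes of the exceptional weights and, for $H(n)$, the appearance of the multiplicity $2$ and of the two neighbors $\mu\pm$.
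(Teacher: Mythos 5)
Your proposal is correct and follows exactly the route the paper intends: the paper derives this theorem as a ``direct consequence'' of Proposition \ref{main thm2}, the identity $\ch(\Delta(\mu))=\Pi\,\ch(L_0^{-}(\mu))$, and Lemma \ref{cha fact}, and your write-up simply supplies the finite $\Delta$-flag summation and the inversion of the multiplicity statement from fixed $\mu$ to fixed $\lambda$. The bookkeeping ($\mu=\lambda$ with multiplicity $1$ and $\mu=\lambda+\epsilon_k$ coming from $k'=k-1$, each with multiplicity $1$) matches the stated formulas.
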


\begin{thm}
Let $\ggg=H(n)$, $n=2r$, and $\lambda\in\Lambda^-$, then the following statements hold.
\begin{itemize}
\item[(1)] If $\lambda=\omega_k$ for $0\leq k\leq r$, then
$$\ch(T(\lambda))=\Pi(\ch(L_0^{-}(\omega_{k-1}))+2\ch(L_0^{-}(\omega_k))+\ch(L_0^{-}(\omega_{k+1}))).$$
\item[(2)] If $\lambda\neq \omega_k$ for any $0\leq k\leq r$, then
$$\ch(T(\lambda))=\Pi(\ch(L_0^{-}(\lambda))).$$
\end{itemize}
\end{thm}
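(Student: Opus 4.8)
The plan is to reduce the character computation entirely to the multiplicity data supplied by Proposition \ref{main thm3}, and then to read that proposition ``backwards''. Since $T(\lambda)$ admits a $\Delta$-flag by Proposition \ref{tilting}, and since by Proposition \ref{main thm3} only finitely many standard modules $\Delta(\mu)$ occur in it (at most three), the flag is finite; applying Lemma \ref{cha fact}(1) termwise along the successive quotients of the flag gives
$$\ch(T(\lambda))=\sum_{\mu\in\Lambda^-}[T(\lambda):\Delta(\mu)]\,\ch(\Delta(\mu)).$$
Invoking the identity $\ch(\Delta(\mu))=\Pi\,\ch(L_0^-(\mu))$ established just above, this becomes
$$\ch(T(\lambda))=\Pi\sum_{\mu\in\Lambda^-}[T(\lambda):\Delta(\mu)]\,\ch(L_0^-(\mu)),$$
so everything hinges on determining, for a \emph{fixed} $\lambda$, the set of $\mu$ with $[T(\lambda):\Delta(\mu)]\neq 0$ together with the corresponding multiplicities.

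Next I would translate Proposition \ref{main thm3}, which is phrased with $\mu$ fixed, into a statement with $\lambda$ fixed. The key bookkeeping is the pair of identities $\omega_{k-1}=\omega_k+\epsilon_k$ and $\omega_{k+1}=\omega_k-\epsilon_{k+1}$, both immediate from $\omega_k=-\epsilon_1-\cdots-\epsilon_k$. Using these, Proposition \ref{main thm3}(1) with $\mu=\omega_j$ says that $\Delta(\omega_j)$ occurs in $T(\lambda)$ exactly for $\lambda\in\{\omega_{j-1},\omega_j,\omega_{j+1}\}$. Fixing $\lambda=\omega_k$ and letting $j$ vary forces $j\in\{k-1,k,k+1\}$, and reading the multiplicities off Proposition \ref{main thm3}(1) gives $[T(\omega_k):\Delta(\omega_{k-1})]=1$ (the ``$\mu-\epsilon_{j+1}$'' case, with $j=k-1$), $[T(\omega_k):\Delta(\omega_k)]=2$, and $[T(\omega_k):\Delta(\omega_{k+1})]=1$ (the ``$\mu+\epsilon_j$'' case, with $j=k+1$). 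Moreover no non-exceptional $\mu$ can contribute, since by Proposition \ref{main thm3}(2) such a $\mu$ requires $\lambda=\mu$ to be non-exceptional, contradicting $\lambda=\omega_k$. Substituting these three multiplicities into the displayed identity yields part (1). For part (2), a non-exceptional $\lambda$ lies in none of the sets $\{\omega_{j-1},\omega_j,\omega_{j+1}\}$, so no exceptional $\Delta(\omega_j)$ appears, and Proposition \ref{main thm3}(2) leaves only $\mu=\lambda$ with multiplicity one, giving $\ch(T(\lambda))=\Pi\,\ch(L_0^-(\lambda))$.

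The main obstacle is the inversion step itself: Proposition \ref{main thm3} is organized by the subscript of $\mu$, and one must match the three distinguished weights $\mu,\ \mu+\epsilon_j,\ \mu-\epsilon_{j+1}$ against the correct index shifts when $\lambda$ is held fixed, taking care not to miscount the diagonal multiplicity $2$. The only other point needing attention is the boundary behaviour at $k=0$ and $k=r$, where one of $\omega_{k-1}$ or $\omega_{k+1}$ falls outside the exceptional range given in Proposition \ref{known result2}; here I would invoke the standing convention $L(\omega_{-1})=0$ (so that $\ch(L_0^-(\omega_{-1}))=0$) and verify that the corresponding standard module simply drops out of the finite sum, so that the stated formula remains valid with the understanding that any absent term contributes zero.
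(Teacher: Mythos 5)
Your proposal is correct and follows exactly the route the paper takes: the paper derives this theorem as a direct consequence of Proposition \ref{main thm3}, Lemma \ref{cha fact}, and the identity $\ch(\Delta(\mu))=\Pi\,\ch(L_0^-(\mu))$, which is precisely your decomposition $\ch(T(\lambda))=\Pi\sum_\mu[T(\lambda):\Delta(\mu)]\ch(L_0^-(\mu))$. Your careful inversion of the indices via $\omega_{k-1}=\omega_k+\epsilon_k$ and $\omega_{k+1}=\omega_k-\epsilon_{k+1}$, and your attention to the boundary convention $L(\omega_{-1})=0$, simply make explicit what the paper leaves as a one-line deduction.
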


\subsection*{Acknowledgements}
The authors would like to thank the referee for his/her helpful comments and suggestion. Y.F. Yao thanks Hao Chang for helpful discussions.

\end{document}